\documentclass[11pt,reqno]{amsart}
\usepackage{amsmath,amsfonts,amsthm,amscd,amssymb,graphicx}
\usepackage{epstopdf}
\numberwithin{equation}{section}
\usepackage{fullpage}
\usepackage{color}


\numberwithin{equation}{section}




\newtheorem{theorem}{Theorem}[section]

\newtheorem{lemma}{Lemma}[section]

\newtheorem{proposition}{Proposition}[section]

\newtheorem{corollary}{Corollary}[section]

\theoremstyle{definition}
\newtheorem{definition}{Definition}[section]
\newtheorem{remark}{Remark}[section]



\newcommand{\RR}{{\mathbb R}}
\newcommand{\ds}{\displaystyle}

\newcommand\bp{\begin{pmatrix}}
\newcommand\ep{\end{pmatrix}}
\newcommand\be{\begin{equation}}
\newcommand\ee{\end{equation}}

\renewcommand\a{\alpha}
\renewcommand\b{\beta}
\renewcommand\o{\omega}
\newcommand\s{\sigma}


\newcommand{\sgn}{\operatorname{sgn}}

\title{Traveling waves for monostable reaction-diffusion-convection equations\\ with discontinuous density-dependent coefficients}

\author{Pavel Dr\'{a}bek}
\address{Department of Mathematics and NTIS, Faculty of Applied Sciences, University of West Bohemia, Univerzitní 8, Plze\'{n} 30100, Czech Republic}
\email{pdrabek@kma.zcu.cz}

\author{Soyeun Jung}
\address{Division of International Studies, Kongju National University, Gongju-si, Chungcheongnamdo, South Korea}
\email{soyjung@kongju.ac.kr}

\author{Eunkyung Ko}
\address{Major in Mathematics, College of Natural Sciences, Keimyung University, Daegu  42601, South Korea}
\email{ekko@kmu.ac.kr}

\author{Michaela Zahradn\'{i}kov\'{a}}
\address{Department of Mathematics and NTIS, Faculty of Applied Sciences, University of West Bohemia, Univerzitní 8, Plze\'{n} 30100, Czech Republic}
\email{mzahrad@kma.zcu.cz}

\begin{document}

\begin{abstract}
	This paper concerns wave propagation in a class of scalar reaction-diffusion-convection equations with $p$-Laplacian-type diffusion and monostable reaction.
	We introduce a new concept of a non-smooth traveling wave profile, which allows us to treat discontinuous diffusion with possible degenerations and singularities at 0 and 1, as well as only piecewise continuous convective velocity. Our approach is based on comparison arguments for an equivalent non-Lipschitz first-order ODE.
	We formulate sufficient conditions for the existence and non-existence of these generalized solutions and discuss how the convective velocity affects the minimal wave speed compared to the problem without convection. We also provide brief asymptotic analysis of the profiles, for which we need to assume power-type behavior of the diffusion and reaction terms. 	
\end{abstract}


\maketitle


\smallskip

{\bf  Key words}:  monostable reaction-diffusion-convection equation, degenerate and singular density-dependent diffusion,  discontinuous diffusion, discontinuous convection, non-Lipschitz reaction, traveling wave

\smallskip

{\bf AMS Subject Classifications}: 35C07, 35K57, 34B16

\smallskip

\section{Introduction}

We are concerned with traveling wave solutions to the reaction-diffusion-convection equation
\begin{equation}
	\label{rdc-eq}
	u_t+h(u)u_x=\left[d(u)|u_x|^{p-2}u_x\right]_x+g(u), \quad (x,t) \in \mathbb{R} \times [0,+\infty).
\end{equation}
Here $p>1$, $h=h(u)$ represents a nonlinear convective velocity, $g=g(u)$ is a monostable reaction term, i.e., $g(0)=g(1)=0$, $g>0$ in $(0,1)$, and $d=d(u)$ can be viewed as a nonlinear diffusion coefficient.
Our aim is to investigate the existence and properties of traveling wave solutions by employing very weak regularity assumptions on these functions. In particular, we consider $g \in C[0,1]$, while the diffusion and convection terms, represented by functions $d$ and $h$ respectively, may have jumps at finitely many points of $[0,1]$. Furthermore, the diffusion coefficient is not necessarily bounded near 0 and/or 1.
The precise hypotheses concerning $d$ and $h$ will be specified in the next section.

Particular cases of equation \eqref{rdc-eq} have been subject to extensive studies due to their wide applicability in modeling of biological, chemical and physical phenomena.
For $p=2$, numerous examples can be found in \cite{GK},
typically including one of the following three types of reaction terms:
Fisher-KPP-type (monostable), ignition- or combustion-type, and Nagumo-type (bistable), represented in Figure \ref{f:reaction}.
\begin{figure}[h]
	\includegraphics[width=14cm]{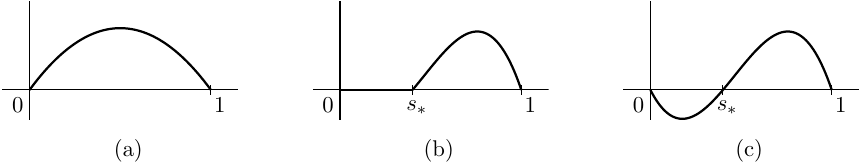}
	\caption{Sign conditions characterizing (a) Fisher-KPP-type (monostable); (b) combustion-type; (c) Nagumo-type (bistable) reaction terms}
	\label{f:reaction}
\end{figure}
This notion refers to their occurrence in classical applications from population genetics, combustion theory, and electrical pulse transmission, which can all be modeled by the semilinear reaction-diffusion equation:
\begin{equation*}
\label{FK}
u_t=u_{xx}+g(u),
\end{equation*}
see the works of Fisher \cite{Fisher}, Nagumo \cite{Nagumo}, and \cite{AW,GK} for other references.
Density-dependent dispersal, observed in many biological populations, can be accounted for by incorporating density-dependent diffusion coefficient $d=d(u)$:
\begin{equation*}
\label{FKd}
u_t=(d(u)u_x)_x+g(u),
\end{equation*}
cf. \cite{Murray, SG-Maini}.
The authors  of \cite{SG-Maini} explain how modeling of such phenomena leads to the above equation with a simply degenerate diffusion coefficient, i.e., $d(0)=0$, $d>0$ in $(0,1]$.
`Fast' diffusion equations with $d(u)=u^{-n}$, $0<n<2$, have been suggested to study situations in which low concentrations disperse very rapidly, see \cite{KingMcCabe} and the references therein.
Other notable examples include the porous media equation $u_t=(u^m)_{xx}$, $m>1$, with applications in gas flow, heat transfer and groundwater flow.
Equations modeling the transport of a liquid also often comprise convective effects related, for example, to gravitational pull.
The case of $p>1$ has also gained interest recently in the context of reaction-diffusion(-convection) equations, e.g., see \cite{AV,Enguica,Hamydy}.

Let us recall that a \emph{traveling wave solution} is a solution of the form $u(x,t)=U(x-ct)$,
where $U$ is the \emph{profile} of the wave, moving at speed $c \in \mathbb{R}$ and connecting the steady states $0$ and $1$.
Traveling waves in reaction-diffusion equation with and without convection have been widely studied in literature.
Existence results, methods and properties of traveling waves depend on particular considerations, such as the type of reaction, regularity assumptions, properties of the diffusion coefficient $d$, convective effects, the value of $p$, and their combinations.

Equation \eqref{rdc-eq} with $p=2$ was investigated in \cite{MM02} for continuous $d$, $g$ and $h$, with $d$ strictly positive.
In \cite{MM05}, the same problem is studied in the case of simply or doubly degenerate $d \in C^1[0,1]$, i.e., assuming $d(0)=0$ or $d(0)=d(1) =0$, respectively. Apart from general existence results, a detailed classification of solutions is provided, based on whether the equilibria 0 and/or 1 are attained.
Since the transition may not be smooth in general, the authors consider traveling wave solutions on a maximal interval $(\alpha,\beta)$ such that $U\in C(cl(\alpha,\beta)) \cap C^2(\alpha,\beta)$, $-\infty \leq \alpha<\beta \leq +\infty$.

In monostable equations,  there commonly exist infinitely many traveling wave solutions with different wave speeds $c \in [c^*,+\infty)$, e.g.,  see  \cite{AW, DrTa, DZ22, Enguica, MM03, Murray}. The minimal wave speed $c^*$ is positive in the absence of convection, with available estimates depending on $d$ and $g$. More precisely, Dr\'{a}bek and Zahradn\'{i}kov\'{a} proved  in a very general setting (\cite{DZ22}) that there exists $c^* \in (0, (p')^{1/{p'}}p^{1/p}\mu ^{1/p'}]$, where $p'$ is a conjugate number of $p$ and
\be
0<\mu:=\sup_{t \in (0,1)} \frac{(d(t))^{\frac{1}{p-1}}g(t)}{t^{p'-1}},
\ee
such that for each $c\geq c^*$ the equation \eqref{rdc-eq} with $h \equiv 0$ possesses a unique nonincreasing traveling wave satisfying boundary conditions $U(-\infty)=1$ and $U(+\infty)=0$.  The authors considered a large class of functions $d$ with diverse characteristics, allowing not only for degenerations and singularities at equilibria, but also jump discontinuities inside the interval $(0,1)$. 
The last assumption might be relevant in the modeling of phenomena involving sudden changes of the diffusion coefficient. Examples include problems from polymer dynamics, in which the diffusivity drops abruptly by several orders of magnitude beyond the gelation critical density, and processes related to hydrogen storage as a source of energy, cf. \cite{Strier} and the references therein. In contrast to \cite{Strier}, where a particular case of discontinuous diffusion was studied, \cite{DZ22} introduced a new concept of continuous solutions that allows for the treatment of very general diffusion coefficients, encompassing those previously considered in literature.

In the present work we employ a similar approach to study how convection $h$ affects the existence and properties of traveling wave solutions, in particular, the range of minimal wave speed $c^*$ compared to the problem without convection. Our existence result shows that the additional transport term $h$ causes  a `shift' of the threshold speed $c^*$
\be\label{c*_U_into}
c^* \in [ \max \{h(0), H(1)\},  ~ (p')^{\frac{1}{p'}} p^{\frac{1}{p} }\mu^{\frac{1}{p'}}+h_M],
\ee
where  $H(U):= \int_0^U h(s) ds$ and $h_M= \sup_{0 \leq t \leq 1} h(t)$. This generalizes the existence result in \cite{MM02, MM05} for the case $p=2$ with continuous $d$, $g$ and $h$. Since our methods do not require continuity of the convective velocity $h$ on $[0,1]$, we make explicit assumptions about discontinuities of both $d$ and $h$.

Following the strategy of \cite{DZ22} we establish the existence and nonexistence result by analyzing the transformed first order boundary value problem
\begin{equation}\label{ode_intro}
\begin{cases}
y'(t) = p' \left[ (c-h(t)) (y^+ (t))^{\frac{1}{p}} -f(t)  \right],  \quad t \in (0,1), \\
 y(0)=0=y(1),
\end{cases}
\end{equation}
where $y^+(t):=\max \{y(t),0\}$. In the case of $h \equiv 0$ (\cite{DZ22}),  a necessary condition $c>0$ for the existence of solution, which guarantees the uniqueness of solutions of backward initial value problem, was a key role in  performing comparison arguments. However, in our case, due to a sign change of $c-h(t)$, the previous idea is not applicable directly.  The novelty of our manuscript is overcoming the difficulty by first proving the positivity of solutions of the backward IVP. In fact, the positivity of $y(t)$ allows us to rewrite the ODE of \eqref{ode_intro} in terms of $z(t)=(y(t))^{1/p'},$ from which we obtain the uniqueness and comparison arguments.

The text is organized as follows.
In Section \ref{MainResults}, we introduce the definition of traveling wave profile and present our main existence and nonexistence results (Theorems \ref{Existence_U} and \ref{Nonexistence_U}).
In Section \ref{Equivalent first order ode}, we show that each profile is necessarily monotone on $\mathbb{R}$ and strictly monotone on an open interval $(z_0,z_1)$, bounded or unbounded. This allows to
reduce the second-order problem for the unknown profile $U$ into an equivalent first-order one \eqref{ode_intro} via a suitable substitution.
Section \ref{E_N_y} is devoted to the investigation of this first order boundary value problem \eqref{ode_intro}, resulting in Theorems \ref{exthm} and \ref{Nonexistence}.
Finally, in Section \ref{asymptotic analysis} we examine the asymptotic behavior of solutions. Due to our general assumptions, we need to assume power-type behavior of $d$ and $g$ near equilibria to classify solutions in terms of attaining values 0 and/or 1.

As our future work the study of traveling wave solutions for bistable or combustion reaction $g$ in our very general setting is interesting direction to carry out. The situation is very different for bistable or combustion nonlinearity, where convection might lead to the disappearance of the unique traveling wave solution that exists when $h \equiv 0$ (see \cite{MMM} for an overview in the case $p=2$).

\section{Main results} \label{MainResults}

As a starting point, we specify the hypotheses on the reaction term $g$, the diffusion coefficient $d$ and the convective velocity $h$, respectively.

\smallskip
\smallskip

\noindent \textbf{(H1)} $g:[0,1] \rightarrow \RR$ is continuous, but not necessarily smooth. In particular, $g$ satisfies
\be \label{r}
g(0)=g(1)=0, \quad \text{$g>0$ in $(0,1)$}.
\ee
\noindent \textbf{(H2)} $d:[0,1] \rightarrow \RR$ is nonnegative lower semicontinuous and $d>0$ in $(0,1)$. There exist $0=m_0<m_1< \cdots < m_r < m_{r+1}=1$ such that $d$ has jump discontinuity at $m_i$, $i=1, 2, \cdots, r$ and
\be
d|_{(m_i, m_{i+1})} \in C(m_i, m_{i+1}), \quad  i=0, 1, \cdots, r.
\notag
\ee

\noindent \textbf{(H3)} $h:[0,1] \rightarrow \RR$ is continuous at $0$ and bounded piecewise continuous on $[0,1]$. There exist $0=k_0<k_1<k_2< \cdots <k_q <k_{q+1}=1$ such that $h$ has jump discontinuity at $k_j$, $j=1, 2, \cdots, q$ and
\be
h|_{(k_j, k_{j+1})} \in C(k_j, k_{j+1}), \quad j=0,1, \cdots, q.
\notag
\ee
Since $h$ is bounded in $[0,1]$, throughout the paper we denote the supremum and infimum of $h$ in $[0,1]$ by
\be \label{maxminh}
h_M:=\ds \sup_{0 \leq t \leq 1} h(t) \quad \text{and} \quad h_m:=\ds \inf_{0 \leq t \leq 1} h(t).
\ee

\smallskip

When looking for traveling wave solutions of \eqref{rdc-eq}, we formally substitute $u(x,t)=U(z)$ with $z=x-ct$ into \eqref{rdc-eq}, which then reduces to an ordinary differential equation
\begin{equation}\label{eqU}
 (d(U)|U'|^{p-2} U')' +(c-h(U) )U' +g(U) =0,
 \end{equation}
where primes denote differentiation with respect to the wave coordinate $z$.
The main purpose of this research is to determine the region in $c$ for which the equation \eqref{eqU} has a solution $U=U(z)$ satisfying the boundary conditions
\be \label{bc}
\lim_{z \rightarrow -\infty} U(z) =1 ~~\mbox{and}~~\lim_{z\rightarrow \infty} U(z) = 0.
\ee

\subsection{Definition of solution}
\label{DefSol}

Since $d(U)$ and $h(U)$ have discontinuities on $[0,1]$, we cannot expect a classical solution $U=U(z)$ of \eqref{eqU}, and hence we need the definition of more general solutions of \eqref{eqU}. For this definition we denote
\be \label{MKN}
\begin{split}
M_U: & = \{z \in \mathbb{R} : U(z) = m_i, ~i=1,2, \dots r \}, \\
K_U: & = \{z \in \mathbb{R} : U(z) = k_j, ~j=1,2, \dots q \}, \\
\text{and} \quad N_U: & = \{z \in \mathbb{R}: U(z)=0 ~\mbox{or}~ U(z)=1  \}.
\end{split}
\ee
Moreover, we say that a function $U$ is piecewise $C^1$ on an interval $I$ if $U \in C(I)$ and there is a discrete set $D \subset I$ such that $U \in C^1(I \setminus D)$. Following \cite{DZ22}, we define what we mean by a solution of \eqref{eqU}.

\smallskip

\begin{definition} [cf. \cite{DZ22}] \label{sdef}
A function $U :\mathbb{R} \rightarrow [0,1] \in C(\RR)$ is called a solution of \eqref{eqU} if it satisfies the following properties:

\smallskip

\noindent (a) $U \in C^1(\RR \setminus D_U)$, where $D_U:=\partial M_U \cup \partial N_U$ is  a discrete set in $\mathbb{R}$.

\smallskip

\noindent (b) For any $z \in \partial M_U$ there exist finite one sided derivatives $U'(z-), U'(z+)$ and
\be \label{Ldef}
L(z) := |U'(z-)|^{p-2} U'(z-) \lim_{\xi \rightarrow z-} d(U(\xi)) = |U'(z+)|^{p-2} U'(z+)\lim_{\xi \rightarrow z+ } d(U(\xi)).
\ee

\noindent (c) A function $v: \mathbb{R} \rightarrow \mathbb{R}$ defined by
 \be\label{v}
 v(z) :=
 \left\{\begin{array}{lll}
 d(U)|U'(z)|^{p-2} U'(z), & z \not\in \partial M_U \cup \partial N_U,\\
 0, & z\in  \partial N_U,\\
 L(z), &z \in \partial M_U
 \end{array}\right.
 \ee
is continuous, and the following integral form holds for any $z , \hat{z} \in \mathbb{R}$
\begin{equation}\label{eqv}
 v(\hat{z}) - v(z) +c (U(\hat{z}) - U(z)) - (H (U(\hat{z})) - H(U(z))) + \int_z^{\hat{z}} g(U(\tau)) d\tau =0,
\end{equation}
where $H(U):= \ds\int_0^U h(s) ds$. Moreover, $\ds \lim_{z \rightarrow \pm \infty} v(z)=0$.
 \end{definition}

\begin{remark} We notice that $M_U$ has no interior points, that is, $\partial M_U=M_U$. If it has an interior point, there exist $i \in \{1, 2, \cdots, r \}$ and an open interval $I$ such that $U(z)=m_i$ for all $z \in I$. Then $U'(z)=0$ for all $z \in I$, and hence it follows from \eqref{eqv} that $\int_z^{\hat{z}} g(U(\tau)) d\tau =0$ for all $z, \hat z \in I$, $z \neq \hat z$, in contradiction to the fact that $g(s)>0$ for all $s \in (0,1)$. In a similar manner we have that $\partial K_U=K_U$.
\end{remark}

\begin{remark}We remark that $U'(z)$ exists for all $z \in K_U \setminus M_U$.  Indeed, if $U'(z+) \not= U'(z-)$ at $z \in K_U\setminus M_U$, since $d(U(z))$ is continuous at $z \in K_U\setminus M_U$, we see that
$$d(U(z-))|U'(z-)|^{p-2} U'(z-) \not= d(U(z+))|U'(z+)|^{p-2} U'(z+).$$
This contradicts to the continuity of $v(z)$, which is  required in Definition \ref{sdef}(c) because $H(U(z))$ is  continuous at $z \in K_U\setminus M_U$ even if $h(z)$ is discontinuous at $z \in K_U\setminus M_U.$
  However, the derivative of $v=v(z)$ may not exist at $z \in K_U \setminus M_U$ even if $U'(z)$ exists. See the remark below.
\end{remark}

\begin{remark}\label{remn_h}
\noindent $(i)$ Let $z \not\in M_U \cup K_U \cup N_U$ and $\hat z=z+\tau$, $\tau \neq 0$. Assume $|\tau|$ is sufficiently small that $\hat z \notin M_U\cup K_U \cup N_U$. Dividing \eqref{eqv} by $\tau$ and making $\tau \rightarrow 0$ we obtain
\be
\lim_{\tau \rightarrow 0} \frac{v(z+\tau)-v(z)}{\tau}+(c -h(U(z)))U'(z) +g(U(z))=0
\notag
\ee
because the derivatives $U'(z)$ and $\frac{d}{dz}H(U(z))$ exist. Thus $v'(z)$ exists for all $z \not\in M_U\cup K_U\cup N_U$ and it holds
\be
v'(z)+(c -h(U(z)))U'(z) +g(U(z))=0
\notag
\ee
which implies that $v \in C^1(\RR \setminus (M_U \cup K_U \cup N_U))$.

\smallskip

\noindent $(ii)$ Let $z \in M_U \cup K_U$ and $\hat z=z+\tau$. Dividing \eqref{eqv} by $\tau$ and making $\tau \rightarrow 0-$ we obtain
\be
v'(z-)+\Big(c -\lim_{\xi \rightarrow z-}h(U(\xi))\Big)U'(z-) +g(U(z))=0,
\notag
\ee
from which the left-hand derivative $v'(z-)$ exists as a finite number for all $z \in M_U \cup K_U$.
Similarly, making $\tau \rightarrow 0+$, the right-hand derivative $v'(z+)$ exists for all $z \in M_U \cup K_U$, and it satisfies
\be
v'(z+) +\Big(c -\lim_{\xi \rightarrow z+}h(U(\xi))\Big)U'(z+) +g(U(z))=0.
\notag
\ee
In particular, if $z \in K_U \setminus M_U$, then $U'(z-)=U'(z+)$ and
\be
v'(z\pm)+\Big(c -\lim_{\xi \rightarrow z\pm}h(U(\xi))\Big)U'(z) +g(U(z))=0.
\notag
\ee
If $z \in M_U \setminus K_U$, then $\ds \lim_{\xi \rightarrow z-}h(U(\xi))=\lim_{\xi \rightarrow z+}h(U(\xi))=h(U(z))$ and
\be
v'(z\pm)+\Big(c -h(U(z))\Big)U'(z\pm) +g(U(z))=0.
\notag
\ee
\end{remark}

\subsection{Existence and nonexistence of traveling waves}

The results of Section \ref{Equivalent first order ode} and Section \ref{E_N_y} lead to the existence and nonexistence of a traveling wave solution for the second order boundary value problem \eqref{eqU}, \eqref{bc}, namely
\begin{equation*}\label{odeU}
	\begin{cases}
		(d(U)|U'|^{p-2} U')' +(c-h(U) )U' +g(U) =0, \quad z \in \RR,  \\
		\ds \lim_{z \rightarrow -\infty} U(z) =1 ~~\mbox{and}~~\ds \lim_{z\rightarrow +\infty} U(z) = 0.
	\end{cases}
\end{equation*}

\begin{theorem} [Existence]\label{Existence_U} Assume that $g$, $d$ and $h$ satisfy \textbf{(H1)}--\textbf{(H3)}, and let
\begin{equation} \label{mu_U}
	0< \mu := \sup_{t\in (0,1)} \frac{(d(t))^{\frac{1}{p-1}}g(t)}{t^{p'-1}} < + \infty,
\end{equation}
where $\frac{1}{p}+\frac{1}{p'}=1$.  Then, in the case $h(0)>H(1)$, there exists a number
\be\label{c*_U}
c^* \in [ h(0), ~ h_M +(p')^{\frac{1}{p'}} p^{\frac{1}{p} }\mu^{\frac{1}{p'}}]
\ee
such that the problem \eqref{eqU}, \eqref{bc} has a unique traveling wave $U=U(z)$, $z \in \RR$, if and only if $c\geq c^*$. In the case $h(0) \leq H(1)$, the same result holds for
\be \label{H(1)_U}
c^* \in (H(1), ~h_M+(p')^{\frac{1}{p'}} p^{\frac{1}{p} }\mu^{\frac{1}{p'}}  ].
\ee
Moreover, recalling the notation from \eqref{MKN}, $U$ has the following properties: \smallskip
\begin{itemize}
\item[(i)] $U$ is strictly decreasing in the open interval $(z_0, z_1)=\{z \in \RR: 0<U(z)<1\}$ where $-\infty \leq z_0 < 0 <z_1 \leq +\infty$, $U(0)=\frac{1}{2}$, $U(z)=1$ for $z \in (-\infty, z_0]$, and $U(z)=0$ for $z \in [z_1, +\infty)$.

\smallskip

\item[(ii)] For $i=1, \cdots, r$, let $\zeta_i \in (z_0, z_1)$ be such that $U(\zeta_i)=m_i$, $\zeta_0 = z_1$, $\zeta_{r+1} = z_0$. Then $U \in C(\RR)$ is a piecewise $C^1(\RR)$--function in the sense that
\be
U|_{(\zeta_{i+1}, \zeta_{i})} \in C^1(\zeta_{i+1}, \zeta_{i}), \quad i=0, 1, \cdots, r
\ee
and one-sided derivatives $\ds U'(\zeta_i \pm)=\lim_{z \rightarrow \zeta_i \pm}U'(z)$, $i=1, 2, \cdots, r$, exist and are finite.

\smallskip

\item[(iii)] $U$ satisfies $\ds \lim_{z \rightarrow z_0+} d(U(z))|U'(z)|^{p-2}U'(z)=\lim_{z \rightarrow z_1-} d(U(z))|U'(z)|^{p-2}U'(z)=0$
and the transition condition
\[
|U'(\zeta_i-)|^{p-2}U'(\zeta_i-) \lim_{z \to \zeta_i-} d(U(z))=|U'(\zeta_i+)|^{p-2}U'(\zeta_i+) \lim_{z \to \zeta_i+} d(U(z)),  \quad  i=1, 2, \cdots, r.
\]

\smallskip

\item[(iv)] $U$ satisfies the problem \eqref{eqU}, \eqref{bc} (in a classical sense) on $\RR \setminus (M_U \cup K_U \cup \partial N_U)$.
\end{itemize}
\end{theorem}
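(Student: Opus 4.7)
The plan is to transfer the existence and nonexistence dichotomy from the first-order boundary value problem \eqref{ode_intro} studied in Section \ref{E_N_y} to the second-order problem \eqref{eqU}, \eqref{bc} via the equivalence developed in Section \ref{Equivalent first order ode}. First I would fix any candidate profile $U$. By the monotonicity analysis of Section \ref{Equivalent first order ode}, $U$ must be nonincreasing on $\RR$ and strictly decreasing on a (possibly unbounded) open interval $(z_0,z_1)$ on which $0<U<1$, with $U(0)=\frac{1}{2}$. Since $U'\le 0$, the function $v$ from \eqref{v} is nonpositive, and the substitution $t=U(z)$, $y(t)=(-v(z(t)))^{p'}$ is well defined on $[0,1]$. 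Using the identity $p'(p-1)=p$ and differentiating the integral form \eqref{eqv}, one checks that the pair $(U,v)$ solves \eqref{eqU}, \eqref{bc} if and only if $y$ solves \eqref{ode_intro} with $f(t)=g(t)(d(t))^{1/(p-1)}$; the boundary conditions $y(0)=y(1)=0$ correspond to $\ds\lim_{z\to\pm\infty}v(z)=0$. Conversely, given a solution $y$ of \eqref{ode_intro} that is positive on $(0,1)$, the profile is recovered on $(z_0,z_1)$ by integrating $U'(z)=-(y(U(z)))^{1/p}/(d(U(z)))^{1/(p-1)}$ from $U(0)=\frac{1}{2}$, and then extended by $1$ on $(-\infty,z_0]$ and $0$ on $[z_1,+\infty)$.

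With the equivalence in hand, the existence/nonexistence dichotomy for \eqref{eqU}, \eqref{bc} follows from Theorems \ref{exthm} and \ref{Nonexistence}: they produce a threshold $c^*$ lying in $[h(0),h_M+(p')^{\frac{1}{p'}}p^{\frac{1}{p}}\mu^{\frac{1}{p'}}]$ when $h(0)>H(1)$ and in $(H(1),h_M+(p')^{\frac{1}{p'}}p^{\frac{1}{p}}\mu^{\frac{1}{p'}}]$ when $h(0)\le H(1)$, such that \eqref{ode_intro} admits a unique positive solution if and only if $c$ lies in the corresponding range. Transferring this through the bijection above yields the bounds \eqref{c*_U} and \eqref{H(1)_U} along with the existence and uniqueness of $U$.

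To establish the qualitative properties I would argue as follows. Item (i) is built into the construction, since the normalization $U(0)=\frac{1}{2}$ and strict monotonicity on $(z_0,z_1)$ are preserved by the reconstruction. For (ii), the map $d\circ U$ is continuous on each open subinterval $(\zeta_{i+1},\zeta_i)$ on which $U$ avoids the jump set $\{m_1,\dots,m_r\}$, and $y$ is continuous and positive on $(0,1)$; hence the explicit formula for $U'$ defines a $C^1$-function there, and the finite one-sided limits $U'(\zeta_i\pm)$ come from the finite one-sided limits of $d(U(z))$ at $\zeta_i$ together with the positivity of $y(m_i)$. For (iii), the boundary limits of $d(U)|U'|^{p-2}U'$ equal $-(y(1))^{1/p'}$ and $-(y(0))^{1/p'}$, both zero; the internal transition condition is equivalent to $(-v(\zeta_i-))^{p'}=y(m_i)=(-v(\zeta_i+))^{p'}$, which is automatic from the continuity of $y$ at $t=m_i$. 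Finally, (iv) is precisely the content of Remark \ref{remn_h}(i), where differentiation of \eqref{eqv} recovers \eqref{eqU} classically on $\RR\setminus(M_U\cup K_U\cup\partial N_U)$.

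The main obstacle, I expect, is the careful matching across the discontinuity sets. One has to verify that the transition condition \eqref{Ldef} at $\zeta_i\in\partial M_U$ and the continuity of $v$ in Definition \ref{sdef}(c) are not extra constraints on the reconstructed $U$ but automatic consequences of the continuity of $y$ at $t=m_i$ and $t=k_j$, respectively. A related subtlety is the treatment of $z_0,z_1$ when infinite, where the integral of $(d(U))^{1/(p-1)}/(y(U))^{1/p}$ must diverge at the appropriate endpoint so that the boundary conditions $U(-\infty)=1$, $U(+\infty)=0$ are compatible with the rest of the construction; the rate of decay of $y$ near $0$ and $1$ supplied by the first-order analysis is what makes this go through.
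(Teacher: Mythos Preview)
Your proposal is correct and follows essentially the same route as the paper: invoke the equivalence of Proposition~\ref{prop} (built from the monotonicity lemmas and the substitution of Section~\ref{Equivalent first order ode}) and then read off the threshold $c^*$ from Theorem~\ref{exthm}, with properties (i)--(iv) already verified in the derivation of that equivalence. Two small remarks: Theorem~\ref{Nonexistence} is not needed here (it feeds into Theorem~\ref{Nonexistence_U}, not the existence statement), and your final ``obstacle'' about divergence of the reconstruction integral is not an issue---the construction \eqref{z(U)} simply maps $(0,1)$ onto some $(z_0,z_1)$ with $-\infty\le z_0<z_1\le+\infty$, and in the finite-endpoint case the profile is extended by the constant values $0$ and $1$, so the boundary conditions \eqref{bc} hold automatically without any decay-rate input.
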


\begin{theorem} [Nonexistence]\label{Nonexistence_U} Assume $g$, $d$ and $h$ satisfy \textbf{(H1)}--\textbf{(H3)}, and let
\be \label{nu_U}
0< \ds \nu:=\liminf_{t \rightarrow 0+}\frac{(d(t))^{\frac{1}{p-1}}g(t)}{t^{p'-1}}.
\ee
If
\be \label{condition_nonexist_U}
h(0) < c< h(0)+(p')^{\frac{1}{p'}}p^{\frac{1}{p}}\nu^{\frac{1}{p'}},
\ee
then the problem \eqref{eqU}, \eqref{bc} has no traveling wave solution $U=U(z)$, $z\in \RR$. In particular, if $\nu =+\infty$, the problem \eqref{eqU}, \eqref{bc} has no traveling wave solution for any $c > h(0)$.
\end{theorem}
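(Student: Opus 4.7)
The strategy is to reduce the question to the first-order boundary value problem \eqref{ode_intro}, via the correspondence established in Section \ref{Equivalent first order ode}, and then to obstruct existence for that problem by analyzing the behavior at $t=0$. Suppose, for contradiction, that a traveling wave $U$ with the given speed $c$ exists; then by that correspondence there is a function $y\in C[0,1]$, strictly positive on $(0,1)$, solving
\[
y'(t)=p'\bigl[(c-h(t))\,y(t)^{1/p}-f(t)\bigr],\qquad y(0)=y(1)=0,
\]
with $f(t):=d(t)^{1/(p-1)}g(t)$. Following the authors' own observation in the introduction, I would pass to the variable $z(t):=y(t)^{1/p'}$, which is $C^1$ wherever $y>0$ and satisfies the cleaner equation
\[
z'(t)=(c-h(t))-\frac{f(t)}{z(t)^{p'-1}},\qquad z(0)=0,\quad z(t)>0\ \text{for small }t>0.
\]

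The crude bound $z'\le c-h$ together with continuity of $h$ at $0$ (hypothesis \textbf{(H3)}) gives $\limsup_{t\to 0+}z(t)/t\le C:=c-h(0)$. I then bootstrap. Fix any $\tilde\nu<\nu$, so that $f(t)\ge\tilde\nu\,t^{p'-1}$ for $t$ sufficiently small, and set $K^*:=\limsup_{t\to 0+}z(t)/t$. By the definition of $\limsup$, for each $\delta>0$ the inequality $z(t)\le(K^*+\delta)t$ holds on an entire right neighborhood of $0$, hence $f(t)/z(t)^{p'-1}\ge \tilde\nu/(K^*+\delta)^{p'-1}$ there. Inserting this into the ODE, integrating from $0$ to $t$, dividing by $t$, and sending first $t\to 0+$, then $\delta\to 0$ and $\tilde\nu\to\nu$ yields
\[
K^*\le C-\nu/(K^*)^{p'-1},\qquad\text{equivalently}\qquad F(K^*):=(K^*)^{p'}-C(K^*)^{p'-1}+\nu\le 0.
\]
The borderline case $K^*=0$ is ruled out directly: the same computation with $\delta\to 0$ would force $z'(t)\to -\infty$, contradicting $z>0$.

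To close the argument I would minimize $F$ on $[0,\infty)$ by elementary calculus. From $F'(\lambda)=\lambda^{p'-2}\bigl(p'\lambda-C(p'-1)\bigr)$, the unique positive critical point is $\lambda=C/p$, with minimum value $F(C/p)=\nu-(p-1)(C/p)^{p'}$. Hence $F(K^*)\le 0$ forces $C^{p'}\ge p'\,p^{p'-1}\nu$, i.e., $c-h(0)\ge (p')^{1/p'}p^{1/p}\nu^{1/p'}$, contradicting \eqref{condition_nonexist_U}. For $\nu=+\infty$ the identical reasoning with $\nu$ replaced by an arbitrary finite $M>0$ yields $c-h(0)\ge(p')^{1/p'}p^{1/p}M^{1/p'}$ for every $M$, impossible for any finite $c$.

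The main obstacle is the bootstrap step, where one needs to combine the $\liminf$ lower bound on $f$ with the $\limsup$ upper bound on $z(t)/t$ carefully enough to yield the single sharp inequality $F(K^*)\le 0$. Continuity of $h$ at $0$, built into \textbf{(H3)}, is crucial here: it is precisely what makes $\tfrac{1}{t}\int_0^t(c-h(s))\,ds$ converge to $c-h(0)$ and so isolates $h(0)$, rather than $\sup_{[0,1]} h$, in the nonexistence threshold.
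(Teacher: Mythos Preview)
Your argument is correct and reaches the same sharp threshold, but by a different route than the paper. The paper proves the first-order nonexistence (its Theorem~\ref{Nonexistence}) by a monotone-iteration scheme: on a small interval $[0,t_0]$ where $0<c-h(t)<L$, it iterates the integral operator $T$ starting from $\tilde y_0(t)=L^{p'}t^{p'}$, obtains a decreasing sequence $\tilde y_n(t)\le a_n L^{p'}t^{p'}$ with $a_n=a_{n-1}^{1/p}-\tilde\nu$, and derives a contradiction from the fact that the fixed-point equation $\tilde\nu=a_\infty^{1/p}-a_\infty$ forces $\tilde\nu\le 1/(p'p^{p'-1})$. Your approach instead passes to $z=y^{1/p'}$, sets $K^*=\limsup_{t\to 0+}z(t)/t$, and bootstraps once to obtain the single algebraic inequality $(K^*)^{p'}-C(K^*)^{p'-1}+\nu\le 0$, which you rule out by minimizing in $K^*$. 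The two optimizations are equivalent after the rescaling $a=(K^*/L)^{p'}$, $\tilde\nu=\nu/L^{p'}$, so the obstruction is the same; your version is more self-contained (no appeal to an abstract greatest-fixed-point theorem) and slightly shorter, while the paper's iteration is more constructive. Two small points worth tightening in a full write-up: (i) $z$ is only absolutely continuous on compact subsets of $(0,1)$, so the integration from $0$ should be justified by first integrating from $\epsilon>0$ and letting $\epsilon\to 0$ (all terms converge since $y\in C[0,1]$); (ii) the exclusion of $K^*=0$ is cleaner if stated as: the inequality $0=K^*\le C-\tilde\nu/\delta^{p'-1}$ fails for $\delta$ small, rather than invoking $z'\to-\infty$.
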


\section{Equivalent first order ode} \label{Equivalent first order ode}

This section is devoted to providing a first order ODE equivalent to the profile equation \eqref{eqU} with the boundary conditions \eqref{bc}. As discussed in the introduction, the existence and nonexistence of solution of \eqref{eqU} and \eqref{bc} will be analyzed via the transformed first order boundary value problem \eqref{ode_intro}.

\subsection{Monotonicity of a traveling wave profile}

Before converting the ODE \eqref{eqU} and \eqref{bc} to \eqref{ode_intro} we first observe basic properties of a profile $U=U(z)$ of \eqref{eqU} and \eqref{bc}. Passing to the limit for $z\rightarrow -\infty$ in \eqref{eqv} and replacing $\hat{z}$ by $z$, we have
\begin{equation}\label{left}
v(z) + c (U(z) -1) - (H (U(z)) - H(1)) +\int_{-\infty}^{z} g(U( \tau) ) d\tau =0
\end{equation}
for all $z \in \mathbb{R}.$
Also, passing to the limit for $\hat{z}\rightarrow +\infty$ in \eqref{eqv} gives
\begin{equation}\label{right}
-v(z) - c U(z)  + H(U(z))  +\int_{z}^{+\infty} g(U( \tau) ) d\tau =0
\end{equation}
for all $z \in \mathbb{R}.$ Passing to the limit for $\hat{z} \rightarrow \infty$ and $z \rightarrow - \infty$ in \eqref{eqv}, it follows that
\begin{equation}\label{ne1}
c- H(1) = \int_{-\infty}^{+\infty} g(U( \tau) ) d\tau.
\end{equation}
Since $g > 0$ in $(0,1)$ and $U$ satisfies \eqref{bc},  we can see that
\begin{equation}\label{nc1}
c >H(1)
\end{equation}
which is a necessary condition for the existence of solutions of \eqref{eqU} and \eqref{bc}. Another necessary condition will be stated in Lemma \ref{lemnc2}.

\bigskip

We now investigate the monotonicity of a solution of \eqref{eqU} with \eqref{bc}. The proofs of the following two lemmas are essentially the same as in the case $h \equiv 0$ (\cite[Lemma 3.1 and Lemma 3.2]{DZ22}). We provide brief proofs for reader's convenience.

\begin{lemma}\label{lem01_h}
Let $U(z)$, $z \in \mathbb{R}$, be a solution of \eqref{eqU} and \eqref{bc}, and  assume $\xi \in N_U$. Then the following two alternatives hold:

\smallskip

\noindent ~$(i)$  if $U(\xi) =0$ then $U(z)=0$ for all $z \geq \xi$;

\smallskip

\noindent $(ii)$ if $U(\xi) =1$ then $U(z)=1$ for all $z \leq \xi$.
\end{lemma}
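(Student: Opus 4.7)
The plan is to exploit the integral identities \eqref{left} and \eqref{right} together with the fact, built into Definition \ref{sdef}(c), that $v$ vanishes on $\partial N_U$. The key observation is that at a boundary point $\xi^* \in \partial N_U$ where $U(\xi^*) \in \{0,1\}$, the non-integral terms in \eqref{right} (or \eqref{left}) collapse: when $U(\xi^*) = 0$ one has $H(0) = 0$ and $c U(\xi^*) = 0$, while when $U(\xi^*) = 1$ one has $U(\xi^*) - 1 = 0$ and $H(U(\xi^*)) - H(1) = 0$. The identity therefore reduces to a vanishing integral of the continuous nonnegative function $g \circ U$ over a half-line, and since $g > 0$ on $(0,1)$, this forces $U$ to be identically $0$ or $1$ there.

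For part (i), I would set $\xi_0 := \inf\{z \in \mathbb{R} : U(z) = 0\}$. This is a finite number, since $\xi$ lies in the set and $U(z) \to 1$ as $z \to -\infty$ excludes very negative $z$, and continuity of $U$ gives $U(\xi_0) = 0$. For $z$ slightly less than $\xi_0$ one has $U(z) \neq 0$ by minimality and $U(z) < 1$ by continuity at $\xi_0$, so $U(z) \in (0,1)$ in a left-neighborhood of $\xi_0$; hence $\xi_0 \in \partial N_U$ and $v(\xi_0) = 0$. Evaluating \eqref{right} at $z = \xi_0$ now gives
\[
\int_{\xi_0}^{+\infty} g(U(\tau))\,d\tau = 0,
\]
and since $g \circ U$ is continuous, nonnegative, and vanishes only where $U \in \{0,1\}$, we obtain $U(\tau) \in \{0,1\}$ for every $\tau \geq \xi_0$. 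Continuity of $U$ on the connected interval $[\xi_0, +\infty)$ together with $U(\xi_0) = 0$ then yields $U \equiv 0$ on $[\xi_0, +\infty)$, and in particular on $[\xi, +\infty)$ because $\xi \geq \xi_0$.

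Part (ii) is symmetric: define $\xi_1 := \sup\{z \in \mathbb{R} : U(z) = 1\}$, verify $\xi_1 \in \partial N_U$ by the analogous argument (now inspecting a right-neighborhood), apply \eqref{left} at $z = \xi_1$ where both the $c$-term and the $H$-term cancel to obtain $\int_{-\infty}^{\xi_1} g(U(\tau))\,d\tau = 0$, and conclude $U \equiv 1$ on $(-\infty, \xi_1]$. The step I expect to require the most care is the identification $\xi_0, \xi_1 \in \partial N_U$, because this is what unlocks the vanishing of $v$ via Definition \ref{sdef}(c) and hence the entire telescoping of the integral identity. Once that point is secured, the rest of the argument is bookkeeping with the boundary data \eqref{bc} and the strict positivity of $g$ on $(0,1)$.
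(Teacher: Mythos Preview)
Your proposal is correct and follows essentially the same route as the paper: evaluate the half-line integral identity \eqref{right} (resp.\ \eqref{left}) at a point where $U=0$ (resp.\ $U=1$), observe that the non-integral terms vanish, and use $g>0$ on $(0,1)$ to force the conclusion. The only difference is that the paper works directly at the given point $\xi$ rather than passing to the extremal point $\xi_0$; note that $v(\xi)=0$ holds for \emph{every} $\xi\in N_U$ (interior points of $N_U$ have $U'(\xi)=0$ since $U$ is locally constant there), so your detour through $\partial N_U$ is harmless but unnecessary.
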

\begin{proof} We only prove (i). The proof of (ii) goes similarly. Let $ U(\xi)=0$ and assume by contraction that there exists $\xi^*>\xi$ such that $0<U(\xi^*)<1$. Then $g(U(\xi^*)) >0$ and hence $\int_{\xi}^{+\infty} g(U( \tau) ) d\tau>0$. Recalling Definition \ref{sdef}(c), we get $v(\xi)=0$ and $H(U(\xi))=0$. From \eqref{right} with $z=\xi$, we conclude that $\int_\xi^\infty g(U(\tau)) d\tau =0$, a contradiction.
\end{proof}

\begin{lemma}\label{lemmon_h}
Let $U(z)$, $z\in  \mathbb{R}$, be a solution of \eqref{eqU} and \eqref{bc}. Then $U$ is nonincreasing in $ \mathbb{R}.$
In particular, for $z \not\in N_U$, we have $U'(z) <0$ if $z \not\in M_U$, and $U'(z-) <0$ and $U'(z+) <0$ if $z \in M_U.$
That is,  $U$ is strictly decreasing for each $z \in \mathbb{R}$ at which $0<U(z) <1.$
 \end{lemma}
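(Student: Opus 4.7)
The plan is to argue by contradiction, following the same template as in \cite{DZ22} for the case $h\equiv 0$. Suppose $U$ is not nonincreasing; then there are points $a<b$ with $U(a)<U(b)$. By Lemma \ref{lem01_h} (applied to both equilibria) I first exclude $U(a)=0$ and $U(b)=1$, so both values lie in $(0,1)$. Using the boundary condition $U(-\infty)=1$ I pick $\tilde a<a$ with $U(\tilde a)>U(a)$ and consider the restriction of $U$ to $[\tilde a,b]$; since $U(\tilde a)$ and $U(b)$ both strictly exceed $U(a)$, the continuous function $U$ attains its minimum on this compact interval at some interior point $z_0\in(\tilde a,b)$, and a further application of Lemma \ref{lem01_h} shows $U(z_0)\in(0,1)$. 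Thus $z_0$ is a genuine interior local minimum of $U$ at which $0<U(z_0)<1$.

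The next step is to transfer this local-minimum information to the auxiliary function $v$ from \eqref{v}. Near $z_0$ one has $U\in(0,1)$, so $d(U)>0$ and from the local-min property one gets $U'(z_0-)\le 0\le U'(z_0+)$ (where these one-sided derivatives exist by Definition \ref{sdef}(b) together with Remark \ref{remn_h}). Consequently $v\le 0$ immediately to the left of $z_0$ and $v\ge 0$ immediately to the right, and continuity of $v$ (Definition \ref{sdef}(c)) forces $v(z_0)=0$. Computing the left difference quotient then yields $v'(z_0-)\ge 0$. The crux of the proof is to show $U'(z_0-)=0$, because Remark \ref{remn_h}(ii) would then give
\[
v'(z_0-)=-\Bigl(c-\lim_{\xi\to z_0-}h(U(\xi))\Bigr)U'(z_0-)-g(U(z_0))=-g(U(z_0))<0,
\]
contradicting $v'(z_0-)\ge 0$.

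Establishing $U'(z_0-)=0$ splits into three cases, and this is the place I expect the main technical care. If $z_0\notin M_U\cup K_U$, then $U$ is classically $C^1$ near $z_0$ and vanishing at an interior minimum is standard. If $z_0\in K_U\setminus M_U$, Remark 2.2 gives $U'(z_0-)=U'(z_0+)=U'(z_0)$, which must then be $0$ at the minimum. If $z_0\in M_U$, I use the transition condition \eqref{Ldef}: the left-hand side $|U'(z_0-)|^{p-2}U'(z_0-)\lim_{\xi\to z_0-}d(U(\xi))$ is $\le 0$, the right-hand side is $\ge 0$, so both must equal zero; since hypothesis \textbf{(H2)} and lower semicontinuity force the one-sided limits of $d(U)$ at $z_0$ to be strictly positive (and finiteness of $L(z_0)=v(z_0)$ handles the possibility that they are infinite), we conclude $U'(z_0\pm)=0$. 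This completes the contradiction and proves that $U$ is nonincreasing.

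For the second assertion—strict monotonicity on $\{0<U<1\}$—the same mechanism applies in reverse: assume $z\in\RR$ with $U(z)\in(0,1)$ and $U'(z)=0$ (or $U'(z\pm)=0$ when $z\in M_U$). Then $v(z)=0$, while $v\le 0$ in a neighborhood by the monotonicity just proved, so $z$ is a local maximum of $v$ and consequently $v'(z-)\ge 0$. Remark \ref{remn_h} again gives $v'(z-)=-g(U(z))<0$, the desired contradiction. The transition condition \eqref{Ldef} combined with positivity of the one-sided limits of $d(U)$ shows moreover that if $U'(z-)=0$ at a point $z\in M_U$ then also $U'(z+)=0$, so both one-sided derivatives are strictly negative wherever $U(z)\in(0,1)$.
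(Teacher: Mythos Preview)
Your argument for the first part (that $U$ is nonincreasing) has a gap at the step where you claim ``$v\le 0$ immediately to the left of $z_0$ and $v\ge 0$ immediately to the right'' and then use this to conclude $v'(z_0-)\ge 0$. The inequality $U'(z_0-)\le 0$ is only a statement about the one-sided derivative \emph{at} $z_0$; it says nothing about the sign of $U'$ (hence of $v$) on a whole left neighborhood. At a local minimum $U$ could in principle oscillate, so you cannot infer $v\le 0$ to the left of $z_0$, and therefore the inequality $v'(z_0-)\ge 0$ is unjustified.

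The repair is to run the implication in the opposite direction, which is exactly what the paper does. Your case analysis establishing $U'(z_0-)=0$ is correct; from this you get $v(z_0)=0$ directly, and Remark~\ref{remn_h} gives $v'(z_0-)=-g(U(z_0))<0$. These two facts force $v>0$ on some left neighborhood of $z_0$; since $d(U)>0$ there, this means $U'>0$ on that neighborhood, so $U$ is strictly increasing just to the left of $z_0$, contradicting the fact that $z_0$ is a local minimum. The paper organizes this as a preliminary claim---any $\xi\notin N_U$ with $U'(\xi\pm)=0$ is a \emph{strict local maximum} of $U$---and then obtains the contradiction from the interior minimum $\xi^*$ having vanishing one-sided derivatives. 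Your second part (strict decrease on $\{0<U<1\}$) is fine, because there the inequality $v\le 0$ genuinely holds everywhere, by the nonincreasing property already proved.
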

 \begin{proof} We first claim that once there is $\xi \not\in N_U$ such that $U'(\xi-)=0$ or $U'(\xi+)=0$, $\xi$ becomes a point of strict local maximum of $U$. In fact, if $\xi \not\in N_U$ satisfies either $U'(\xi-)=0$ or $U'(\xi+)=0$, then it satisfies $U'(\xi-)=0=U'(\xi+)$. Otherwise, it contradicts to \eqref{Ldef} because $\ds \lim_{z \rightarrow \xi\pm}d(U(z)) > 0$ for  $\xi \not\in N_U$. Let $\xi \not\in N_U$ such that $U'(\xi-)=0$. It follows from Remark \ref{remn_h} that
$$v'(\xi-) =-g(U(\xi)) < 0$$
regardless of whether $\xi \in M_U \cup K_U$ or $\xi \not\in M_U \cup K_U$.
Since $v(\xi) =0$ and $v'(\xi-)<0$, there exists a left neighborhood $\mathcal{U}_{-}(\xi)$ such that $\mathcal{U}_{-}(\xi) \cap N_U = \emptyset$ and
\be \label{pv}
v(z)>0 \quad \text{for all $z \in \mathcal{U}_{-}(\xi)$.}
\ee
Since $d(U(z)) >0$ in $z \in \mathcal{U}_{-}(\xi)$, it holds from Definition \ref{sdef} and \eqref{pv} that $U'(z-) >0$ and $U'(z+) >0$ in $z \in \mathcal{U}_{-}(\xi)$. Thus we deduce that $U(z) < U(\xi)$ for all  $z \in  \mathcal{U}_-(\xi)$. In a similar manner, since $U'(\xi+) =0$, there exists a right neighborhood $\mathcal{U}_+(\xi)$ such that
 $U(z)  < U(\xi)$ for all  $z \in  \mathcal{U}_+(\xi)$. Therefore, $\xi$ is the point of strict local maximum of $U$.

Next, we prove that $U(z)$ cannot have a strict local maximum on $\RR \setminus N_U$. Let $\xi \notin N_U$  and $U(\xi)$ be a strict local maximum. Since $U(\xi)<1$ and $U(z) \rightarrow 1$ as $z \rightarrow -\infty$, there exists $\xi_* \in (- \infty, \xi)$ such that $U(\xi) \leq U(\xi_*)<1$. Since $U \in C(\RR)$, let $\xi^* \in [\xi_*, \xi]$ be the global minimum of $U$ over $[\xi_*, \xi]$. Then it follows that $U(\xi^*) < U(\xi) \leq U(\xi_*) <1$, and so $\xi^* \in (\xi_*, \xi)$. Moreover, $\xi^* \notin N_U$, i.e., $0<U(\xi^*)$. Indeed, if $U(\xi^*)=0$, Lemma \ref{lem01_h}(i) gives $U(\xi)=0$, a contradiction. Now, we can derive contradictions in either case $\xi^* \in M_U$ or $\xi^* \not\in M_U$ regardless of whether $\xi^*$ is in $K_U$. If $\xi^* \not\in M_U$, then by Remark \ref{remn_h}, $U'(\xi^*)$ exists and $U'(\xi^*)=0$ as $\xi^*$ is a local minimizer of $U$, in contradiction to the above claim. If $\xi^* \in M_U$, we see from Definition \ref{sdef}(b) that  $\sgn U'(\xi^*-) = \sgn U'(\xi^*+)$. However, since $U(\xi^*)$ is a local minimum,  we conclude that $U'(\xi^*-)= U'(\xi^*+)=0$ which also contradicts the claim. It completes the proof.
\end{proof}

\subsection{Transformation into the first order boundary value problem}

We now reduce the second order boundary value problem \eqref{eqU} and \eqref{bc} to a first order boundary value problem. We apply the reduction established in \cite{DZ22} under the hypotheses \textbf{(H1)}--\textbf{(H3)}. As discussed in Lemma \ref{lem01_h} and Lemma \ref{lemmon_h}, a solution $U=U(z)$ of \eqref{eqU} and \eqref{bc} is nonincreasing in $\RR$. More precisely, $U$ is strictly decreasing on a suitable open interval $(z_0, z_1) \subseteq \RR$ with $-\infty \leq z_0 <z_1 \leq +\infty$ and
\be \label{asy}
\begin{split}
& \lim_{z \rightarrow z_0+} U(z)=1 \quad \text{and} \quad  U(z)=1  ~  \text{if} ~  -\infty < z \leq z_0, \\
& \lim_{z \rightarrow z_1-} U(z)=0 \quad  \text{and} \quad  U(z)=0  ~ \text{if} ~  z_1 \leq z <+\infty.
\end{split}
\ee
In what follows, recalling \eqref{MKN},  let
\be \label{MK_U}
M_U=\{\zeta_1, \zeta_2, \cdots, \zeta_r \} \quad \text{and} \quad K_U=\{\kappa_1, \kappa_2, \cdots, \kappa_q \},
\ee
where
\be
U(\zeta_i)=m_i, \quad i=1, 2, \cdots, r \quad  \text{and} \quad U(\kappa_j)=k_j, \quad  j=1, 2, \cdots, q.
\notag
\ee
We now rewrite
\be \label{MunionK}
M_U \cup K_U=\{\xi_1, \xi_2, \cdots, \xi_n\} \quad \text{and} \quad U(\xi_i)=s_i, \quad i=1, 2, \cdots, n.
\ee
That is, by setting $s_0=0$ and $s_{n+1}=1$ we have that
\be
0=s_0 < s_1< s_2 < \cdots < s_n < s_{n+1}= 1,
\notag
\ee
and $d$ or/and $h$ has jump discontinuity at $s_i$, $i=1, 2, \cdots, n$. Moreover,
\be
d|_{(s_i, s_{i+1})}, ~ h|_{(s_i, s_{i+1})} \in C(s_i, s_{i+1}), \quad  i=0, 1, \cdots, n.
\notag
\ee
Lastly, recalling \eqref{asy}, let us set $\zeta_0=\kappa_0=\xi_0=z_1$ and $\zeta_{r+1}=\kappa_{q+1}=\xi_{n+1}=z_0$.

\smallskip

It follows from Definition \ref{sdef}(a) that $U \in C(\RR)$ is a piecewise $C^1(\RR)$--function in the sense that 
\be
U|_{(\zeta_{i+1}, \zeta_{i})} \in C^1(\zeta_{i+1}, \zeta_{i}), \quad i=0, 1, \cdots, r.
\notag
\ee
Therefore, $U|_{(z_0, z_1)}$ has a continuous strictly decreasing inverse function $U^{-1}: (0,1) \rightarrow (z_0, z_1)$, $z=U^{-1}(U)$, such that  $U^{-1}|_{(m_i, m_{i+1})} \in C^1(m_i, m_{i+1})$, $i=0, 1, \cdots, r$ with finite limits
\be
\lim_{U \rightarrow m_i-}\frac{dz}{dU}=\Big(\lim_{z \rightarrow \zeta_i+}\frac{dU}{dz} \Big)^{-1} \quad  \text{and} \quad  \lim_{U \rightarrow m_i+}\frac{dz}{dU}=\Big(\lim_{z \rightarrow \zeta_i-}\frac{dU}{dz} \Big)^{-1}, \quad i=1, 2, \cdots, r.
\notag
\ee
Recalling the function $v=v(z)$ defined in \eqref{v}, we make the substitution
\be \label{wdef}
w(U):=v(z(U)), \quad U \in (0,1).
\ee
As discussed in Remark \ref{remn_h}, $v \in C(\RR)$ and $v  \in C^1(\RR \setminus (M_U \cup K_U \cup N_U))$. Here, we emphasize that $v=v(z)$ is not differentiable at $z \in K_U \setminus M_U$ even if $U'(z)$ exists.  Thus we immediately see that
\be
w \in C(0,1) \quad \text{and} \quad  w|_{(s_i, s_{i+1})} \in C^1(s_i, s_{i+1}),  \quad i=0, 1, \cdots, n,
\notag
\ee
with finite limits
$\ds \lim_{U \rightarrow s_i+}w'(U)$ and $\ds \lim_{U \rightarrow s_i-}w'(U)$, $i=1, 2, \cdots, n$. Moreover, direct computation gives that for any $z \in (\xi_{i+1}, \xi_{i})$ and $U \in (s_i, s_{i+1})$, $i=0, 1, \cdots, n$,
\be
\frac{d}{dz}v(z)=\frac{d}{dz}w(U(z))=\frac{dw}{dU}U'(z)=-\frac{dw}{dU}\Big|\frac{v(z)}{d(U(z))}  \Big|^{\frac{1}{p-1}}=-\frac{dw}{dU}\Big|\frac{w(U)}{d(U)} \Big|^{\frac{1}{p-1}}.
\notag
\ee
Here, we have used  $v(z)=-d(U(z))|U'(z)|^{p-1}$ because $U'(z)<0$ on $(\xi_{i+1}, \xi_{i})$. Consequently, from \eqref{eqU} the function $w=w(U)$ satisfies
\be
-\frac{dw}{dU}\Big|\frac{w(U)}{d(U)} \Big|^{\frac{1}{p-1}}-(c-h(U))\Big|\frac{w(U)}{d(U)} \Big|^{\frac{1}{p-1}}+g(U)=0, \quad U \in (s_i, s_{i+1}), \quad i=0, 1, \cdots, n.
\notag
\ee
Multiplying both sides by $d(U)^{1/(p-1)}$, writing $t$ instead of $U$, and making the substitution
\be \label{y}
y(t):=|w(t)|^{p'},  \quad p'=\frac{p}{p-1},
\ee
we arrive at
\be
\frac{1}{p'}\frac{dy}{dt}=(c-h(t))(y(t))^{\frac{1}{p}}-(d(t))^{\frac{1}{p-1}}g(t), \quad t \in (s_i, s_{i+1}), \quad i=0, 1, \cdots, n.
\notag
\ee
Finally, by setting
\be \label{f}
f(t):=(d(t))^{\frac{1}{p-1}}g(t)
\ee
we obtain the following first order ODE
\be \label{ode2}
y'(t) = p' \left[ (c-h(t)) (y (t))^{\frac{1}{p}} -f(t)  \right],  \quad t \in (0,1) \setminus \bigcup_{i=1}^n \{s_i \}.
\ee
Furthermore, we deduce from \eqref{bc}, \eqref{wdef} and Definition \ref{sdef}(c) that $\ds \lim_{t \rightarrow 0+}w(t)=\lim_{t \rightarrow 1-}w(t)=0$. Therefore, $y=y(t)$ satisfies the boundary conditions
\be \label{ode2_b}
y(0)=y(1)=0
\ee
and $y \in C[0,1]$.
\bigskip

On the other hand, let us derive \eqref{eqU} and \eqref{bc} from the first order boundary value problem \eqref{ode2} and \eqref{ode2_b}. Suppose that $y=y(t) \in C[0,1]$ is a positive solution of \eqref{ode2} and \eqref{ode2_b}. First, set
\be
w(t):=-(y(t))^{\frac{1}{p'}}, \quad t \in (0,1).
\notag
\ee
By writing $U$ instead of $t$ we immediately see that $w=w(U) \in C(0,1)$ satisfies
\be \label{w(U)}
-\frac{dw}{dU}\Big|\frac{w(U)}{d(U)} \Big|^{\frac{1}{p-1}}-(c-h(U))\Big|\frac{w(U)}{d(U)} \Big|^{\frac{1}{p-1}}+g(U)=0, \quad U \in (0,1) \setminus \bigcup_{i=1}^n \{s_i \}.
\ee
We now define
\be \label{z(U)}
z(U)=-\int_{\frac{1}{2}}^U \Big|\frac{d(s)}{w(s)}\Big|^{\frac{1}{p-1}}ds, \quad U \in (0,1).
\ee
Then $z=z(U)$ is continuous strictly decreasing in $(0,1)$ with $z(\frac{1}{2})=0$ and $z>0$ in $(0,\frac{1}{2})$ while $z<0$ in $(\frac{1}{2},1)$, so that $z=z(U)$ maps $(0,1)$ onto $(z_0, z_1)$ where $-\infty \leq z_0 < z_1 \leq +\infty$. Thus, it has a continuous strictly decreasing inverse function $z^{-1}:(z_0, z_1) \rightarrow (0,1)$, $U=z^{-1}(z)$, such that
\be
\lim_{z \rightarrow z_0+}U(z)=1 \quad \text{and} \quad \lim_{z \rightarrow z_1-}U(z)=0.
\notag
\ee
Also, it follows from \eqref{z(U)} that $U|_{(\zeta_{i+1}, \zeta_{i})} \in C^1(\zeta_{i+1}, \zeta_{i})$, $i=0, 1, \cdots, r$, with
\be \label{dUdz}
\frac{dU}{dz}=\frac{1}{\frac{dz}{dU}}=-\Big|\frac{w(U)}{d(U)}\Big|^{\frac{1}{p-1}}<0, \quad U \in (m_i, m_{i+1}),
\ee
from which we deduce that
\be \label{w(U(z))}
w(U(z))=-d(U(z))\Big| \frac{dU}{dz} \Big|^{p-1}, \quad z \in (\zeta_{i+1}, \zeta_{i}), \quad i=0, 1, \cdots, r.
\ee
Setting $v(z):=w(U(z))$, since $y^{\prime}(U)$ is undefined at $U(\xi_i)=s_i$, $i=1, 2, \cdots, n$, we see from \eqref{dUdz} that
\be \label{dvdz}
\frac{dv}{dz}=\frac{dw}{dU}\frac{dU}{dz}=-\frac{dw}{dU}\Big|\frac{w(U(z))}{d(U(z))}\Big|^{\frac{1}{p-1}}, \quad z \in (\xi_{i+1}, \xi_{i}), \quad i=0, 1, \cdots, n.
\ee
Applying \eqref{dUdz}--\eqref{dvdz} to \eqref{w(U)} yields
\be
\frac{d}{dz}\Big[d(U(z))\Big|\frac{dU}{dz}\Big|^{p-2}\frac{dU}{dz}\Big]+(c-h(U(z)))\frac{dU}{dz}+g(U(z))=0, \quad z \in (\xi_{i+1}, \xi_{i}), \quad i=0, 1, \cdots, n.
\notag
\ee
Moreover, recalling $\zeta_0=z_1$, $\zeta_{r+1}=z_0$, $w=w(U) \in C[0,1]$, and $U=U(z) \in C(z_0, z_1)$, we get from \eqref{ode2_b} and \eqref{w(U(z))} that
\be
\lim_{z \rightarrow z_0+} d(U(z))|U'(z)|^{p-2}U'(z)=\lim_{z \rightarrow z_1-} d(U(z))|U'(z)|^{p-2}U'(z)=0,
\notag
\ee
and the following one--sided limits are equal as a finite number
\be
\lim_{z \rightarrow \zeta_i-} d(U(z))|U'(z)|^{p-2}U'(z)=\lim_{z \rightarrow \zeta_i+} d(U(z))|U'(z)|^{p-2}U'(z), \quad i=1, 2, \cdots, r.
\notag
\ee
Since $U$ is strictly decreasing in $(z_0, z_1)$,
\be
\lim_{z \rightarrow \zeta_i\mp} d(U(z))=\lim_{m \rightarrow m_i\pm}d(m), \quad i=1, 2, \cdots, r.
\notag
\ee
By \eqref{dUdz} one-sided derivatives $U'(\zeta_i \pm)$ exist as
\be
U'(\zeta_i \pm)=-\Big|\frac{w(m_i)}{\ds \lim_{m \rightarrow m_i \pm} d(m)}\Big|^{\frac{1}{p-1}}, \quad i=1, 2, \cdots, r,
\notag
\ee
(notice that $\ds \lim_{m \rightarrow m_i \pm} d(m) \ne 0$ due to \textbf{(H2)}),
from which we deduce the transition condition
\be \label{transition}
|U'(\zeta_i-)|^{p-2}U'(\zeta_i-) \lim_{m \rightarrow m_i-} d(m)=|U'(\zeta_i+)|^{p-2}U'(\zeta_i+) \lim_{m \rightarrow m_i+} d(m), \quad i=1, 2, \cdots, r.
\ee

\smallskip

We now summarize the above observation in the following proposition.

\begin{proposition} \label{prop} A piecewise $C^1$--function $U:\RR \rightarrow [0,1]$ is a solution  of \eqref{eqU} and \eqref{bc} if and only if $y:[0,1] \rightarrow \RR$ is a continuous positive solution of \eqref{ode2} and \eqref{ode2_b}. In particular, a solution $U$ of \eqref{eqU} and \eqref{bc} is determined (up to translation) by a solution $y$ of \eqref{ode2} and \eqref{ode2_b} and vice versa.
\end{proposition}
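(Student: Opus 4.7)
The plan is to assemble the two substitution chains already laid out in Section~\ref{Equivalent first order ode} and verify that they really invert one another consistently with Definition~\ref{sdef}. The argument splits into the two implications.

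For the direction $U \mapsto y$, I would start from a piecewise $C^1$ solution $U$ of \eqref{eqU}, \eqref{bc}. By Lemma~\ref{lem01_h} and Lemma~\ref{lemmon_h}, $U$ is nonincreasing on $\RR$ and strictly decreasing on a maximal open interval $(z_0,z_1)$ with $U(z_0+)=1$ and $U(z_1-)=0$. Hence $U|_{(z_0,z_1)}$ admits a continuous strictly decreasing inverse $z:(0,1)\to(z_0,z_1)$ that is $C^1$ on each piece $(m_i,m_{i+1})$. Setting $w(U):=v(z(U))$ with $v$ from \eqref{v} and $y(t):=|w(t)|^{p'}$, the chain rule displayed in \eqref{dUdz}--\eqref{dvdz}, together with Remark~\ref{remn_h}, yields \eqref{ode2} on each $(s_i,s_{i+1})$. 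The positivity of $y$ on $(0,1)$ is automatic because $U'(z)<0$ strictly on $(z_0,z_1)\setminus M_U$ (Lemma~\ref{lemmon_h}) forces $v(z)<0$ there, hence $w<0$ on $(0,1)$ by continuity of $v$. The boundary conditions \eqref{ode2_b} and the continuity of $y$ on $[0,1]$ follow from $\lim_{z\to\pm\infty}v(z)=0$ and the continuity of $v$ required in Definition~\ref{sdef}(c).

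For the reverse direction $y \mapsto U$, starting from a continuous positive solution $y$ of \eqref{ode2}, \eqref{ode2_b}, set $w(t):=-(y(t))^{1/p'}$; because $w<0$ and $d>0$ on $(0,1)$, the integrand in \eqref{z(U)} is strictly positive, so $z(U)$ is continuous and strictly decreasing from $(0,1)$ onto some $(z_0,z_1)$ with $z(1/2)=0$ (this anchor is what makes the correspondence unique up to translation). Its inverse defines $U$ on $(z_0,z_1)$, extended by $U\equiv 1$ on $(-\infty,z_0]$ and $U\equiv 0$ on $[z_1,+\infty)$. Differentiating via \eqref{dUdz} and substituting into \eqref{w(U)} gives \eqref{eqU} pointwise on $\RR\setminus(M_U\cup K_U\cup N_U)$, as computed in the text. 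The limit $\lim_{z\to z_0+}d(U)|U'|^{p-2}U'=\lim_{z\to z_1-}d(U)|U'|^{p-2}U'=0$ comes from $y(0)=y(1)=0$ combined with \eqref{w(U(z))}. The piecewise $C^1$ regularity of $U$ (with the correct partition $\{\zeta_i\}$) is built into \eqref{dUdz}.

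The main delicate step, and the one that must be checked carefully, is Definition~\ref{sdef} at the jump set. Three items are at stake: (a) the matching \eqref{Ldef} at points of $\partial M_U$, which is exactly the transition condition \eqref{transition} derived from the one-sided limits of $U'(\zeta_i\pm)$ and the fact that $\lim_{m\to m_i\pm}d(m)\ne 0$ under \textbf{(H2)}; (b) continuity of $v$ on $\RR$, which at points of $K_U\setminus M_U$ uses that $d(U(\cdot))$ is continuous across such $z$ and that $y$ (equivalently $w$) is continuous at $s_i$; (c) the integral identity \eqref{eqv} on all of $\RR$, which follows by integrating the pointwise ODE piece by piece on $(\xi_{i+1},\xi_i)$ and telescoping, using continuity of $v$ at the $\xi_i$ to cancel boundary contributions. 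Once these are verified, the two constructions are mutual inverses (normalizing the translation by $U(0)=1/2$, i.e.\ $z(1/2)=0$), proving the equivalence and the up-to-translation uniqueness claim.
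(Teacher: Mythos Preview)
Your proposal is correct and follows essentially the same approach as the paper: the proposition is stated as a summary of the two substitution chains developed in Section~\ref{Equivalent first order ode}, and you accurately recapitulate both directions using the same ingredients (Lemmas~\ref{lem01_h}--\ref{lemmon_h}, the definitions \eqref{wdef}--\eqref{y} and \eqref{z(U)}, and the transition condition \eqref{transition}). Your explicit verification of Definition~\ref{sdef}(c) via piecewise integration and telescoping is a minor elaboration of what the paper leaves implicit, but it is not a different method.
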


Thanks to Proposition \ref{prop}, in order to study the existence and uniqueness for $U$ of \eqref{eqU} and \eqref{bc}, we focus on the reduced first order boundary value problem \eqref{ode2} and \eqref{ode2_b} in the next section.

\smallskip

We now notice that another necessary condition for $c$ can be obtained from \eqref{ode2} and \eqref{ode2_b}.

\begin{lemma} \label{lemnc2}
Let $y$ be a positive solution of \eqref{ode2} and \eqref{ode2_b}.
Then
\begin{equation}\label{nc2}
c\geq h(0).
\end{equation}
\end{lemma}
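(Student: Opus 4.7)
The plan is to argue by contradiction: suppose $c<h(0)$ and show that any positive solution $y$ of \eqref{ode2}, \eqref{ode2_b} would be forced to take negative values immediately to the right of $0$, contradicting the positivity hypothesis. The decisive feature of \textbf{(H3)} is that $h$ is continuous precisely at the endpoint $0$, so the sign of $c-h(t)$ is stable on a right-neighborhood of $0$.

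First I would localize near $t=0$. Since $s_0=0<s_1$, the interval $(0,s_1)$ contains no discontinuity point of $h$ or $d$, so \eqref{ode2} holds there in the classical sense and, by continuity of the right-hand side, any continuous solution $y$ is of class $C^1$ on $(0,s_1)$. Using the assumption $c<h(0)$ and continuity of $h$ at $0$, I pick $\delta\in(0,s_1)$ such that $c-h(t)<0$ for every $t\in[0,\delta]$.

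Next I would obtain strict monotonicity on $(0,\delta)$. There $y(t)\geq 0$, while $f(t)=(d(t))^{1/(p-1)}g(t)>0$ by \textbf{(H1)} and the positivity of $d$ in $(0,1)$. Dropping the non-positive term gives
\[
y'(t)=p'\bigl[(c-h(t))(y(t))^{1/p}-f(t)\bigr]\leq -p'f(t)<0\qquad\text{on }(0,\delta),
\]
so $y$ is strictly decreasing on $(0,\delta)$. Combined with $y(0)=0$ and continuity of $y$ at $0$, this forces $y(t)\leq\lim_{s\to 0^+}y(s)=0$ for every $t\in(0,\delta)$, contradicting positivity of $y$ on $(0,1)$.

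The argument is short and the main obstacle is essentially bookkeeping: one must confirm that a genuine right-neighborhood of $0$ lies in the regular set of the ODE (so that $y'$ exists classically and can be integrated there), and that $f$ is strictly positive near $0$ even though $d$ may be singular at $0$. Both are built into the setup, via $s_1>0$ and the positivity of $d$ and $g$ on $(0,1)$; in particular, we never need $f$ to be bounded, only its sign.
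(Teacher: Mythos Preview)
Your proof is correct and follows essentially the same approach as the paper: argue by contradiction, use the continuity of $h$ at $0$ from \textbf{(H3)} to secure $c-h(t)<0$ on some $[0,\delta]$, and then conclude that $y$ must become negative near $0$. The paper reaches the contradiction by integrating the ODE over $[0,\delta]$ to obtain $y(\delta)<0$ directly, whereas you deduce pointwise $y'<0$ and hence $y(t)\leq y(0)=0$; these are two phrasings of the same estimate.
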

\begin{proof}
We assume by contradiction that 
$c<h(0)$. Since $h$ is continuous at $0$,
there exists $\delta >0$ such that $c <h(t)$ for all $t \in [0,\delta].$
Integrating \eqref{ode2} over $[0,\delta]$ gives
\begin{eqnarray*}
y(\delta) = \int_0^\delta y'(\tau) d\tau = p' \left[\int_0^\delta (c- h(\tau)) (y(\tau))^{\frac{1}{p}}d\tau -\int_0^\delta f(\tau) d\tau \right]<0
\end{eqnarray*}
which  contradicts to 
the positivity of $y$. Hence, $c \geq h(0)$.
\end{proof}

\begin{remark} Combining the above lemma with \eqref{nc1}, we consider
\be\label{ncs}
c \geq h(0) \quad \text{and} \quad c>H(1)
\ee
as necessary conditions for the existence of solutions of \eqref{ode2} and \eqref{ode2_b}. The necessary condition \eqref{nc1} can also be obtained from \eqref{ode2}. Indeed, by dividing  \eqref{ode2} by $y(t)^{\frac{1}{p}}$ and integrating it over $[0,1]$ we arrive at
\be \label{c>H(1)}
0=c-H(1)-\int_0^1 \frac{f(t)}{y(t)^{\frac{1}{p}}} dt,
\ee
from which we deduce that $c>H(1)$.

We here emphasize that $c$ can be negative. In the case that $h \equiv 0$ in \cite{DZ22},  a necessary condition $c >0$ was a crucial key to studying the boundary value problem \eqref{ode2} and \eqref{ode2_b}. Thus, in the present work, a sign change of $c-h(t)$ in the equation \eqref{ode2} causes a technical difficulty as compared to \cite{DZ22}. For this reason, we focus on how we overcome this difficulty in the next section.
\end{remark}

\section{Existence, uniqueness, and nonexistence of the first order ode} \label{E_N_y}

In this section we study the existence and uniqueness of solutions for the first order boundary value problem
\begin{equation}\label{ode}
\begin{cases}
y'(t) = p' \left[ (c-h(t)) (y^+ (t))^{\frac{1}{p}} -f(t)  \right],  \quad t \in (0,1), \\
 y(0)=0=y(1),
\end{cases}
\end{equation}
where $y^+(t):=\max \{y(t),0\}$, $p>1$ and $p'=\frac{p}{p-1}$ is the exponent conjugate. We will consider this problem independently of the original problem \eqref{eqU} and \eqref{bc}, and impose appropriate hypotheses on $f$ and $h$ at each step. As $U$ is not a solution of \eqref{eqU} in a classical sense, we should employ the concept of a solution $y=y(t)$ for \eqref{ode} in the sense of Carath\'{e}odory (\cite[Chapter 3]{W}). That is, a solution $y(t)$ is absolutely continuous and satisfies the first order ODE almost everywhere in $(0,1)$ and the boundary conditions.

For $(t, y, c) \in [0,1] \times \RR^2$, we set
\be
\Gamma (t, y, c) := p' \left[ (c-h(t)) (y^+)^{\frac{1}{p}} -f(t)  \right].
\notag
\ee
Before studying \eqref{ode}, let us first consider the following initial value problem which depends on a parameter $c \in \RR$:
\begin{equation}\label{y1}
y'(t) =  \Gamma (t, y, c), \quad  y(1) =0,
\end{equation}
which is referred to as the backward initial value problem. We notice that there exists a local solution, in the sense of Carath\'{e}odory, of \eqref{y1} if the so--called Carath\'{e}odory conditions are satisfied:
\begin{itemize}
\item[$(a)$] for almost every $t \in [0,1]$ fixed, $\Gamma(t, \cdot, \cdot)$ is continuous in $y$ and $c$,
\item[$(b)$] for every $y \in \RR$ and $c \in \RR$ fixed, $\Gamma(\cdot, y, c)$ is measurable in $t$, and
\item[$(c)$] there is a Lebesgue--integrable function $m(t)$ satisfying $|\Gamma(t, y, c)| \leq m(t)$ for all $t \in [0,1]$ and $y \in \RR$.
\end{itemize}
If $f, h \in L^1(0,1)$, then the Carath\'{e}odory conditions $(a)$ and $(b)$ will be clearly satisfied.  For any fixed $c \in \RR$, let $y_c=y_c(t)$ denote the solution of \eqref{y1} in the sense of Carath\'{e}odory. The global existence result of $y_c$ on $[0,1]$ was shown in \cite[Lemma 4.1]{DZ22} without the convection, i.e., $h(t) \equiv 0$. Similarly, we can have the following existence result.

\begin{lemma} \label{cara_existence} Let $f \in L^1(0,1)$, $h \in L^\infty(0,1)$, and $c \in \RR$ be fixed. Then the backward initial value problem \eqref{y1} has at least one global solution $y_c=y_c(t)$ on $[0,1]$ in the sense of Carath\'{e}odory.
\end{lemma}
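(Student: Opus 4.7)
The plan is to adapt the proof of \cite[Lemma 4.1]{DZ22}, which treats the case $h\equiv 0$, making only the modifications needed to accommodate a bounded measurable convective term. First, I would reverse time via $\tau:=1-t$ and set $\tilde y(\tau):=y_c(1-\tau)$, which turns \eqref{y1} into the forward Carath\'{e}odory initial value problem
\begin{equation*}
\tilde y'(\tau)=-\Gamma(1-\tau,\tilde y(\tau),c),\qquad \tilde y(0)=0,
\end{equation*}
to which standard existence theory applies directly.

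Second, I would verify the three Carath\'{e}odory conditions on $[0,1]\times[-R,R]$ for each $R>0$. Continuity in $y$ follows from continuity of $y\mapsto(y^+)^{1/p}$, and measurability in $t$ follows from the measurability of $h$ and $f$. For the integrable majorant, using $|y|\le R$ we have $(y^+)^{1/p}\le R^{1/p}$, so
\begin{equation*}
|\Gamma(t,y,c)|\le p'\bigl(|c|+\|h\|_{L^\infty}\bigr)R^{1/p}+p'|f(t)|=:m_R(t)\in L^1(0,1),
\end{equation*}
which by, e.g., \cite[Chapter 3]{W}, yields a local Carath\'{e}odory solution on a maximal interval of existence $[0,\tau^*)$ with $\tau^*\le 1$.

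Third, to rule out finite-time blow-up and obtain $\tau^*=1$, I would establish an a priori bound using the sublinear dependence on $y$. Setting $M:=p'(|c|+\|h\|_{L^\infty})$, the integral form of the ODE on $[0,\tau)\subset[0,\tau^*)$ gives
\begin{equation*}
|\tilde y(\tau)|\le p'\|f\|_{L^1(0,1)}+M\int_0^\tau|\tilde y(\sigma)|^{1/p}\md\sigma.
\end{equation*}
Because $1/p<1$, comparison with the scalar majorant $\psi'=M\psi^{1/p}$, $\psi(0)=p'\|f\|_{L^1}$ (a Bihari-type argument) yields the explicit bound
\begin{equation*}
|\tilde y(\tau)|\le\Bigl((p'\|f\|_{L^1})^{1/p'}+\tfrac{M}{p'}\tau\Bigr)^{p'},
\end{equation*}
which stays finite on all of $[0,1]$. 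The usual continuation principle then forces $\tau^*=1$, so $\tilde y$, and hence $y_c$ after reversing back to $t$, is defined on the whole of $[0,1]$.

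The only genuinely delicate step I anticipate is the a priori estimate: since the right-hand side is merely $L^1$ in $t$, one must justify Bihari's inequality in the absolutely continuous setting and verify that the resulting bound remains uniform as the local solution is continued. The rest is essentially the same Carath\'{e}odory machinery as in \cite{DZ22}, with the only effect of $h\in L^\infty$ being to inflate the constant multiplying $(y^+)^{1/p}$ by $\|h\|_{L^\infty}$.
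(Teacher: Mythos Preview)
Your proposal is correct and identifies the same key modification as the paper: replacing $|c|$ by $|c|+\|h\|_{L^\infty}$ in the constant multiplying $(y^+)^{1/p}$. The mechanics differ slightly, however. The paper first derives an a priori bound directly from the algebraic inequality $\rho\le p'(|c|+\|h\|_{L^\infty})\rho^{1/p}+p'\|f\|_{L^1}$ (using $1/p<1$), then \emph{truncates} $\Gamma$ outside $|y|\le K$ so that the modified right-hand side satisfies the global Carath\'{e}odory majorant condition, and finally invokes a global existence theorem \cite[Theorem 10.XVIII]{W}; the solutions of the truncated and original problems coincide because of the a priori bound. You instead run local existence first, then use a Bihari-type comparison to bound the solution and invoke a continuation principle. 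Both routes are standard; the paper's truncation avoids having to justify continuation for $L^1$-in-$t$ right-hand sides (the point you flagged as delicate), while your Bihari estimate gives an explicit quantitative bound. The time reversal you introduce is harmless but unnecessary, since Carath\'{e}odory theory handles backward problems just as well.
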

\begin{proof}
First, following the proof of \cite[Lemma 4.1]{DZ22} by replacing $|c|$ with $|c|+ \|h\|_{L^{\infty}(0,1)}$, we can prove an a priori estimate for all solutions of \eqref{y1}. For $\s \in (0,1)$, integrating \eqref{y1} from $\sigma$ to $1$ gives
\be \label{y(sigma)}
y(\s)=-p'\int_{\s}^1 (c-h(\tau))(y^+(\tau))^{\frac{1}{p}}d\tau +p'\int_{\s}^1 f(\tau) d\tau.
\ee
For $t \in (0, 1)$, set
\be
\rho(t):=\max_{\s \in [t,1]} |y(\sigma)|.
\notag
\ee
It follows from \eqref{y(sigma)} that for $\s \in [t,1]$,
\be
|y(\s)|\leq p'(|c|+\|h\|_{L^{\infty}(0,1)}) \int_{\s}^1(y^+(\tau))^{\frac{1}{p}}d\tau +p'\|f\|_{L^1(0,1)},
\notag
\ee
from which we deduce that for $t \in (0,1)$,
\be
\begin{split}
\rho(t)
& \leq p'(|c|+\|h\|_{L^{\infty}(0,1)}) \max_{\s \in [t,1]} \int_{\s}^1(y^+(\tau))^{\frac{1}{p}}d\tau +p'\|f\|_{L^1(0,1)} \\
& \leq p'(|c|+\|h\|_{L^{\infty}(0,1)}) \int_{0}^1 \max_{\s \in [t,1]}  (y^+(\s))^{\frac{1}{p}}d\tau +p'\|f\|_{L^1(0,1)} \\
& \leq p'(|c|+\|h\|_{L^{\infty}(0,1)})( \rho(t) )^{\frac{1}{p}}+p'\|f\|_{L^1(0,1)}.
\notag
\end{split}
\ee
Since $\frac{1}{p}<1$, there is a constant $K>0$ such that $\rho(t) < K$ for all $t \in [0, 1]$. That is, all solutions of \eqref{y1} are a priori bounded by  $K$.
Set
\be
\tilde \Gamma(t, y, c):=
\begin{cases}
\Gamma(t, y, c), & \quad |y| < K, \\
p'\Big[(c-h(t))K^{\frac{1}{p}}-f(t)\Big], & \quad y \geq K, \\
-p'f(t), & \quad y \leq -K.
\end{cases}
\notag
\ee
It can be easily verified by the same procedure as above that the solutions of
\be \label{my1}
y'(t) =\tilde \Gamma(t, y, c), \quad y(1)=0,
\ee
are also bounded by this constant $K$ and the set of solutions of the modified problem \eqref{my1} coincides with the set of solutions of \eqref{y1}.
But $\tilde \Gamma$ satisfies Carath\'{e}odory conditions, in particular, there exists a function $m(t) \in L^1(0,1)$ satisfying $|\tilde \Gamma(t, y, c)| \leq m(t)$ for all $t \in [0,1]$ and $y \in \RR$. According to \cite[Theorem 10.XVIII]{W}, the backward initial value problem \eqref{my1} has a global solution in $[0,1]$ in the sense of Carath\'{e}odory. 
\end{proof}

\begin{remark} \label{R1} The same existence result holds for the forward initial value problem
\begin{equation}\label{y0}
y'(t) =  \Gamma (t, y, c), \quad  y(0) =0,
\end{equation}
by integrating \eqref{y0} from $0$ to $\s \in (0,1)$. However, throughout this section, we will concentrate on the backward initial value problem \eqref{y1}, rather than \eqref{y0},  in order to prove that $y_c>0$ in $(0,1)$ in Lemma \ref{positiveness}. Later we determine the region of $c$ for which \eqref{y1} has a positive solution $y_c$ that satisfies $y_c(0)=0$.
\end{remark}

Next, we prove that for any fixed $c \in \RR$, the solution $y_c$ of the backward initial value problem \eqref{y1} is in fact strictly positive in $(0,1)$. In this section the positivity of $y_c$  plays a key role in studying the uniqueness of solutions and comparison arguments of \eqref{y1}. This is rather different from the case $h(t) \equiv 0$ (\cite{DZ22}). Without the convection term, a necessary condition $c >0$ implies that a function $y \mapsto c(y^+)^{\frac{1}{p}}$, $y \in \RR$, is one--sided Lipschitz, and therefore it guaranteed the uniqueness of solutions of \eqref{y1} for all $c > 0$ (\cite[Lemma 4.3]{DZ22}). Furthermore, it led to the comparison argument that was one of the basic tools throughout \cite{DZ22}, in particular, when they proved the positivity of solutions of \eqref{y1} (\cite[Lemma 4.4 -- Theorem 4.7]{DZ22}). However, in our case, we lose the one--sided Lipschitz condition due to a possible sign change of $c-h(t)$ in \eqref{y1}. This difficulty will be surmounted by first proving, without such comparison results as in \cite{DZ22}, that $y_c >0$ in $(0,1)$ for any $c \in \RR$. In fact, we have realized that the positivity can be verified directly from \eqref{y1} regardless of the sign of $c-h(t)$.

\begin{lemma} \label{positiveness}
Let $f>0$ be lower semicontinuous in $(0,1)$, $f \in L^1(0,1)$, $h \in L^\infty(0,1)$, and $c \in \RR$. If $y_c=y_c(t)$ is a solution of the backward initial value problem \eqref{y1}, then $y_c(t) >0$ in $(0, 1)$.
\end{lemma}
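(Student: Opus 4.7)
The plan is a contradiction argument. Assume there exists $t_0\in(0,1)$ with $y_c(t_0)\le 0$. The strategy is to locate a point $a\in(0,1)$ at which $y_c(a)=0$ and $y_c>0$ on a right neighborhood of $a$, and then use the lower semicontinuous bound $f(a)>0$ to force $y_c<0$ just to the right of $a$, contradicting the positivity.

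For the local step, suppose $a\in(0,1)$ satisfies $y_c(a)=0$ and $y_c>0$ on $(a,a+\eta)$. Integrating \eqref{y1} on $[a,t]$ for $t\in(a,a+\eta)$,
\[
y_c(t)=p'\int_a^t (c-h(\tau))(y_c(\tau))^{1/p}\,d\tau-p'\int_a^t f(\tau)\,d\tau.
\]
Writing $M(t):=\sup_{[a,t]} y_c$, continuity of $y_c$ with $y_c(a)=0$ forces $M(t)\to 0$ as $t\to a^+$, so the first integral is bounded in absolute value by $p'(|c|+\|h\|_{L^\infty})M(t)^{1/p}(t-a)=o(t-a)$. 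For the second integral, lower semicontinuity of $f$ at the interior point $a$ together with $f(a)>0$ yields, for any fixed $\varepsilon\in(0,f(a))$, a $\delta>0$ with $f(\tau)\ge f(a)-\varepsilon$ on $(a,a+\delta)$, so $\int_a^t f\,d\tau\ge (f(a)-\varepsilon)(t-a)$. Thus, for $t>a$ sufficiently close to $a$, the right-hand side is strictly negative, contradicting $y_c>0$ on $(a,a+\eta)$.

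To produce such an $a$, set $B:=\{t\in(0,1):y_c(t)>0\}$, which is open. First, $B\ne\emptyset$: otherwise $y_c\le 0$ on $[t_0,1]$, so $y_c'=-p'f$ a.e.\ there, and integration over $[t_0,1]$ gives $y_c(t_0)=p'\int_{t_0}^1 f\,d\tau>0$, contradicting $y_c(t_0)\le 0$. If $B$ admits a connected component $(a,b)$ with $a>0$, then $y_c(a)=0$ by continuity (as $a\notin B$) and the local step applies. The only alternative is that $B$ has a single component $(0,b_0)$; since $t_0\notin B$ this forces $b_0\le t_0<1$, but then $y_c\le 0$ on $[b_0,1]$ and the same integration gives $y_c(b_0)=p'\int_{b_0}^1 f\,d\tau>0$, contradicting $b_0\notin B$.

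The main obstacle, and the point at which the argument diverges from \cite{DZ22}, is that $c-h(\tau)$ may change sign, so the one-sided Lipschitz continuity of $y\mapsto c(y^+)^{1/p}$ exploited there is unavailable. The estimate is instead driven entirely by the reactionlike term $-p'f$; the essential use of $a$ being an interior point is precisely that lower semicontinuity of $f$ with $f(a)>0$ supplies a uniform positive lower bound dominating the vanishing contribution from the $h$- and $y^{1/p}$-terms.
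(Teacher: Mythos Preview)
Your proof is correct and follows essentially the same approach as the paper: both argue by contradiction, locate an interior zero of $y_c$ with nonnegative right neighborhood, integrate \eqref{y1} from that point, and use continuity of $y_c$ to make the convective term $o(t-a)$ while lower semicontinuity of $f$ keeps the reaction term bounded below by a positive multiple of $t-a$. The only difference is organizational: the paper splits into the cases $y_c(t_0)<0$ and $y_c(t_0)=0$ directly, whereas you package the same dichotomy as a component analysis of the open set $B=\{y_c>0\}$; the integral estimates and the role of lower semicontinuity are identical.
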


\begin{proof} We argue by contradiction. Assume that there exists $t_0 \in (0,1) $ such that $y_c(t_0) \leq 0.$ First, we consider  the case $y_c(t_0) <0.$
Since $y_c$ is continuous in $(0,1),$ there exists an interval  $ t_0 \in (t_1, t_2) \subseteq (0,1)$
such that $y_c(t) <0$ for all $t \in (t_1,t_2)$ and $y_c(t_2) =0.$
Integrating \eqref{y1} from $t_1$ to $t_2$ yields that
\begin{equation*}
0 \leq  \int_{t_1}^{t_2} y'_c (\tau) d\tau = -p' \int_{t_1}^{t_2} f(\tau) d\tau  <0,
\end{equation*}
which is a contradiction.

\smallskip

Next, for the case $y_c (t_0) =0$ we may assume that $y_c (t)\geq0$ in $(t_0,1)$.
Otherwise, a contradiction is induced by the argument used in the first case.
Since $f$ is lower semicontinuous and $f>0$ in $(0,1)$, there is a positive number $\eta$ such that
\be\label{ineq_f}
f(t) \geq \eta>0 \quad \text{for all $t \in [t_0, 1-\varepsilon]$}
\ee
for given $\varepsilon>0$ sufficiently small. Again integrating \eqref{y1} from $t_0$ to $t \in (t_0, 1-\varepsilon]$ we have
\be
y_c(t)=p'\int_{t_0}^t (c-h(\tau))(y_c(\tau))^{\frac{1}{p}}d\tau -p' \int_{t_0}^t f(\tau) d\tau,
\notag
\ee
or equivalently
\be \label{int2}
\frac{y_c(t)}{t-t_0}=p'\Big[\frac{\int_{t_0}^t (c-h(\tau))(y_c(\tau))^{\frac{1}{p}}d\tau}{t-t_0} - \frac{\int_{t_0}^t f(\tau) d\tau}{t-t_0}\Big].
\ee
We observe that for all $t \in (t_0, 1-\varepsilon]$
\be
( c- h_M ) \int_{t_0}^{t}(y_c(\tau))^{\frac{1}{p}}d\tau  \leq \int_{t_0}^{t} (c-h(\tau)) (y_c(\tau))^{\frac{1}{p}}d\tau
 \leq (c - h_m) \int_{t_0}^{t} (y_c(\tau))^{\frac{1}{p}}d\tau.
\notag
\ee
Here, as stated in \eqref{maxminh}, $h_M$ and $h_m$ are supremum and infimum of $h$ in $[0,1]$. Since $y_c \in C[0,1]$ and $y_c(t_0)=0$,
\be
\lim_{t \rightarrow t_0^+} \frac{\int_{t_0}^t (y_c(\tau))^{\frac{1}{p}}d\tau}{t-t_0}=\lim_{t \rightarrow t_0^+} (y_c(t))^{\frac{1}{p}}= 0,
\notag
\ee
from which we deduce that
\be \label{lim1}
\lim_{t \rightarrow t_0^+} \frac{\int_{t_0}^t (c-h(\tau))(y_c(\tau))^{\frac{1}{p}}d\tau}{t-t_0}= 0.
\ee
Due to \eqref{ineq_f}, we immediately see that
\be \label{lim2}
\frac{\int_{t_0}^t f(\tau) d\tau}{t-t_0} \geq \eta>0.
\ee
Hence we conclude from \eqref{int2}--\eqref{lim2} that for $t$ close enough to $t_0$ with $t>t_0$,
\be
\frac{y_c(t)}{t-t_0}<0,
\notag
\ee
in contradiction to the fact that $y_c (t)\geq 0$ in $(t_0,1)$. Therefore, $y_c(t) >0$ on $(0,1)$.
\end{proof}

\begin{remark} \label{R2} As seen in the proof of Theorem \ref{positiveness}, the positivity of solutions holds only for the backward initial value problem \eqref{y1} because $f >0$ in $(0,1)$. For this reason, in order to solve the boundary value problem \eqref{ode} we start with the backward initial value problem \eqref{y1}.
\end{remark}

\smallskip

Thanks to the positivity of solutions of \eqref{y1}, we have the following uniqueness result.

\begin{lemma} \label{Uniqueness}
Let $f>0$ be lower semicontinuous in $(0,1)$, $f \in L^1(0,1)$, $h \in L^\infty(0,1)$, and $c \in \RR$. Then the backward initial value problem \eqref{y1} has a unique positive solution $y_c=y_c(t)$.
\end{lemma}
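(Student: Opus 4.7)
The plan is to follow the strategy sketched in the remark preceding the lemma: since Lemma \ref{positiveness} already gives $y_c > 0$ on $(0,1)$ for every positive solution, the substitution $z(t) := (y(t))^{1/p'}$ is well-defined on $(0,1)$ and converts the H\"older-type right-hand side in \eqref{y1} into one that is smooth and strictly monotone in $z$ for $z > 0$. This restores the one-sided comparison property that was lost because of the possible sign change of $c - h(t)$, and it lets me run a standard monotonicity argument for uniqueness.

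Concretely, I would first verify that any positive Carath\'eodory solution $y$ of \eqref{y1} gives rise to a function $z = y^{1/p'}$ satisfying $z'(t) = (c - h(t)) - f(t)\, z(t)^{1-p'}$ almost everywhere on $(0,1)$. This is a formal chain-rule computation using $y = z^{p'}$ and $y^{1/p} = z^{p'-1}$; the rigorous justification is that on every compact subinterval $[a,b] \subset (0,1)$ positivity of $y$ keeps $z$ bounded away from $0$, so $z$ is absolutely continuous on $[a,b]$ and the chain rule applies. The crucial algebraic observation is that on $z > 0$ the new right-hand side $F(t,z) := (c - h(t)) - f(t)\, z^{1-p'}$ is strictly increasing in $z$, since $1 - p' < 0$ and $f > 0$.

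For the uniqueness itself, given two positive solutions $y_1$ and $y_2$ and supposing they differ, I would choose $t^* \in (0,1)$ with (WLOG) $z_1(t^*) > z_2(t^*) > 0$ and define $t_1 := \inf\{t \in (t^*, 1] : z_1(t) \leq z_2(t)\}$; the boundary condition $z_1(1) = z_2(1) = 0$ guarantees $t_1 \in (t^*, 1]$, and by continuity $z_1(t_1) = z_2(t_1)$. On $[t^*, t_1)$ both $z_i$ are positive with $z_1 > z_2$, so subtracting the two transformed ODEs gives $(z_1 - z_2)'(t) = f(t)\bigl(z_2(t)^{1-p'} - z_1(t)^{1-p'}\bigr) > 0$ a.e. Absolute continuity on compact subintervals of $(0,1)$ then yields $z_1(\tau) - z_2(\tau) \geq z_1(t^*) - z_2(t^*) > 0$ for all $\tau \in [t^*, t_1)$, and letting $\tau \to t_1^-$ contradicts $z_1(t_1) = z_2(t_1)$.

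The only subtle point I anticipate is that $z_i$ need not be absolutely continuous on the whole of $[0,1]$ (the map $s \mapsto s^{1/p'}$ is not Lipschitz at $s = 0$, where both $y_i$ vanish), so the derivative identity holds only a.e.\ in $(0,1)$. This is precisely why I phrase the contradiction using $z_1 - z_2$ at interior points $\tau \in [t^*, t_1)$ and invoke continuity of $z_i$ on $[0,1]$ only at the very last step to pass to the limit $\tau \to t_1^-$. No hypotheses beyond those already stated in the lemma are needed.
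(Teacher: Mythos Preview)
Your proof is correct and follows essentially the same approach as the paper: both arguments use the substitution $z = y^{1/p'}$ to obtain the transformed equation $z' = (c-h(t)) - f(t)\,z^{1-p'}$ and then exploit the strict monotonicity of $z \mapsto -f(t)\,z^{1-p'}$ on $\{z>0\}$. The only cosmetic difference is that the paper packages the monotonicity step via $\omega(t) := (z_1(t)-z_2(t))^2$, showing $\omega' \geq 0$ a.e.\ with $\omega(1)=0$, whereas you use a first-touching-time argument for $z_1 - z_2$; your treatment of the absolute-continuity issue near the endpoints is in fact slightly more explicit than the paper's.
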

\begin{proof} Suppose that both $y_1=y_1(t)$ and $y_2=y_2(t)$ are solutions of \eqref{y1}. The positivity of solutions allows us to set
\be
z_1(t):=(y_1(t))^{\frac{1}{p'}}>0 \quad \text{and} \quad z_2 (t):=(y_2(t))^{\frac{1}{p'}}>0, \quad t \in (0,1).
\notag
\ee
Recalling \eqref{y1} we directly compute
\be
z_1'(t) =c-h(t)-\frac{f(t)}{(z_1(t))^{1/(p-1)}} \quad \text{and} \quad
z_2'(t)  =c-h(t)-\frac{f(t)}{(z_2(t))^{1/(p-1)}}, \quad a.e. ~t \in (0,1).
\notag
\ee
We now set
\be
\omega(t):=(z_1(t)-z_2(t))^2, \quad t \in [0,1].
\notag
\ee
Then $\omega(t) \geq 0$ in $[0,1]$, and $\omega(1)=0$. Moreover, since $f>0$ in $(0,1)$, for a.e. $t \in (0,1)$
\be
\begin{split}
\omega'(t)
& =2(z_1(t)-z_2(t))(z_1'(t)-z_2'(t)) \\
& =-2f(t)(z_1(t)-z_2(t))\Big(\frac{1}{(z_1(t))^{1/(p-1)}}-\frac{1}{(z_2(t))^{1/(p-1)}}\Big) \\
& \geq 0,
\end{split}
\notag
\ee
from which we deduce that $\omega(t) =0$ a.e. in $[0,1]$. Therefore, $y_1(t)=y_2(t)$ in $[0,1]$.
\end{proof}

\smallskip

From the above uniqueness result, we immediately have continuous dependence of solutions on the parameter $c$.

\begin{lemma} \label{Cdop}
Let $c_0 \in \RR$ be fixed. As $c \rightarrow c_0$, the solutions $y_c=y_c(t)$ of \eqref{y1} converge to $y_{c_0}$ uniformly in $[0,1]$ $($i.e., in the topology of $C[0,1]$$)$.
\end{lemma}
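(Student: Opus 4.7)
The plan is to use a compactness argument combined with the uniqueness result in Lemma~\ref{Uniqueness}. The key observation is that the family $\{y_c\}$ for $c$ ranging in a compact neighborhood of $c_0$ is uniformly bounded and equicontinuous, so Arzelà--Ascoli applies, and any limit of a subsequence must coincide with $y_{c_0}$ by the uniqueness of the positive solution.

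First I would fix $R > 0$ and restrict to $c \in [c_0 - R, c_0 + R]$. Inspecting the a priori bound obtained in the proof of Lemma~\ref{cara_existence}, the constant $K$ there depends on $|c| + \|h\|_{L^\infty(0,1)}$ and $\|f\|_{L^1(0,1)}$ only; hence there exists $K_R > 0$ such that $0 \leq y_c(t) \leq K_R$ on $[0,1]$ for every $c$ in this range. From the integral form
\[
y_c(t) = -p' \int_t^1 \bigl[(c - h(\tau))(y_c(\tau))^{1/p} - f(\tau)\bigr]\, d\tau,
\]
we obtain for $0 \leq t_1 < t_2 \leq 1$ the estimate
\[
|y_c(t_2) - y_c(t_1)| \leq p'(|c_0| + R + \|h\|_{L^\infty}) K_R^{1/p}\,(t_2 - t_1) + p' \int_{t_1}^{t_2} f(\tau)\, d\tau,
\]
so the family $\{y_c\}_{|c-c_0|\leq R}$ is uniformly bounded and uniformly equicontinuous on $[0,1]$ (the integral term tends to $0$ as $t_2 - t_1 \to 0$ uniformly in $c$ since $f \in L^1(0,1)$).

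Next I would argue by contradiction. Suppose $y_c \not\to y_{c_0}$ in $C[0,1]$ as $c \to c_0$. Then there exist $\varepsilon_0 > 0$ and a sequence $c_n \to c_0$ with $\|y_{c_n} - y_{c_0}\|_{C[0,1]} \geq \varepsilon_0$. By Arzelà--Ascoli, a subsequence (still denoted $y_{c_n}$) converges uniformly on $[0,1]$ to some $y^* \in C[0,1]$, and clearly $y^*(1) = 0$ and $y^* \geq 0$. Passing to the limit in the integral identity above and using uniform convergence together with dominated convergence (with $L^1$ majorant involving $f$), $y^*$ satisfies
\[
y^*(t) = -p' \int_t^1 \bigl[(c_0 - h(\tau))(y^*(\tau))^{1/p} - f(\tau)\bigr]\, d\tau, \quad t \in [0,1],
\]
i.e., $y^*$ is a Carathéodory solution of~\eqref{y1} with parameter $c_0$. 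By Lemma~\ref{positiveness}, $y^* > 0$ on $(0,1)$, and then Lemma~\ref{Uniqueness} forces $y^* = y_{c_0}$, contradicting $\|y^* - y_{c_0}\|_{C[0,1]} \geq \varepsilon_0$.

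The only delicate step is passing to the limit in the nonlinear term $(c_n - h(\tau))(y_{c_n}(\tau))^{1/p}$ under the integral: uniform convergence of $y_{c_n}$ together with continuity of $s \mapsto s^{1/p}$ on $[0, K_R]$ yields uniform convergence of $(y_{c_n})^{1/p}$ to $(y^*)^{1/p}$, and then the product with the bounded measurable factor $(c_n - h)$ passes to the limit by dominated convergence. Everything else is routine once the uniform a priori bound and the uniqueness machinery from Lemma~\ref{Uniqueness} are in place.
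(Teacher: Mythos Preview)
Your argument is correct and self-contained. The paper takes a much shorter route: it simply observes that $\Gamma(t,y,c)$ is continuous in $c$ and invokes general continuous-dependence theorems for ODEs with parameters (Coddington--Levinson, Chapter~2, Theorems~4.1 and~4.2), together with the uniqueness established in Lemma~\ref{Uniqueness}. Your approach instead unpacks such a result by hand: a uniform a~priori bound from Lemma~\ref{cara_existence}, equicontinuity from the integral identity, Arzel\`a--Ascoli, and identification of the limit via Lemmas~\ref{positiveness} and~\ref{Uniqueness}. What your version buys is transparency---one sees exactly which estimates drive the conclusion and avoids checking that the Carath\'eodory setting fits the hypotheses of the cited black-box theorem; the paper's version buys brevity.
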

\begin{proof} Since $\Gamma (t, y, c) $ is continuous in $c$, the proof follows from the previous uniqueness result and \cite[Theorem 4.1 and 4.2 in Chapter 2]{CL}.
\end{proof}

\smallskip

Following \cite{DZ22, W}, we introduce the notion of the defect $P_c \varphi$ of a function $\varphi=\varphi(t)$ with respect to the differential equation $y'(t)=\Gamma(t, y, c)$:
\be \label{defect}
P_c \varphi:=\varphi'(t)-\Gamma(t, \varphi(t),c).
\ee
In particular, $P_c y_c=0$ a.e. in $[0,1]$ if  $y_c$ satisfies $y'(t)=\Gamma(t, y, c)$. Using this notion we deduce the following comparison results between positive functions.

\begin{lemma} \label{lecom} Let $f>0$ be lower semicontinuous in $(0,1)$, $f \in L^1(0,1)$, $h \in L^{\infty}(0,1)$, and $c \in \RR$. Assume that $\varphi=\varphi(t)>0$ in $(0,1)$. Then the following two alternatives hold:

\smallskip

\noindent $(a)$ if
\be \label{super}
\varphi(1) \geq 0 \quad  \text{and} \quad P_c \varphi \leq 0 \quad \text{a.e. in $[0,1]$},
\ee
then $y_c \leq \varphi$ in $[0,1]$;

\smallskip

\noindent $(b)$ if
\be \label{sub}
\varphi(1) \leq 0 \quad \text{and} \quad  P_c \varphi \geq 0 \quad \text{a.e. in $[0,1]$},
\ee
then $y_c \geq \varphi$ in $[0,1]$.
\end{lemma}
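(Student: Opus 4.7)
The plan is to mirror the device used in Lemma \ref{Uniqueness}: rather than comparing $y_c$ with $\varphi$ directly (where the coefficient $c-h(t)$ can change sign and prevents any one-sided Lipschitz structure in the unknown), I would compare their $p'$-th roots. Since $y \mapsto y^{1/p'}$ turns $y^{1/p}$ into $z^{1/(p-1)}$ and absorbs the $(c-h)$ term as a constant-in-$z$ drift, the reactive part $-f/z^{1/(p-1)}$ becomes strictly decreasing in $z$, which is exactly the monotonicity required for a comparison principle.

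Concretely, I would set $z_c(t):=y_c(t)^{1/p'}$ and $\psi(t):=\varphi(t)^{1/p'}$. Both are positive and locally absolutely continuous on $(0,1)$ by Lemma \ref{positiveness} and the hypothesis $\varphi>0$. Since $y_c'=p'z_c^{p'-1}z_c'$ and $y_c^{1/p}=z_c^{p'-1}=z_c^{1/(p-1)}$, the identity $P_c y_c=0$ transforms into
\[
z_c'(t)=(c-h(t))-\frac{f(t)}{z_c(t)^{1/(p-1)}}\quad\text{a.e. in }(0,1),
\]
and by the same algebra the one-sided defect conditions become
\[
\psi'(t)\le(c-h(t))-\frac{f(t)}{\psi(t)^{1/(p-1)}}\quad\text{(case (a))},\qquad
\psi'(t)\ge(c-h(t))-\frac{f(t)}{\psi(t)^{1/(p-1)}}\quad\text{(case (b))},
\]
a.e. in $(0,1)$. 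The drift $c-h(t)$ then cancels in the comparison and only the monotone term $\zeta\mapsto -f/\zeta^{1/(p-1)}$ remains.

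For part (a), I would introduce $\omega(t):=\bigl((z_c(t)-\psi(t))^+\bigr)^2$. On the open set $\{z_c>\psi\}\subset(0,1)$, subtracting the two relations above gives
\[
z_c'(t)-\psi'(t)\ge f(t)\!\left[\frac{1}{\psi(t)^{1/(p-1)}}-\frac{1}{z_c(t)^{1/(p-1)}}\right]\ge 0,
\]
because $f>0$ and $\zeta\mapsto\zeta^{-1/(p-1)}$ is strictly decreasing. Hence $\omega'\ge 0$ a.e. on $[0,1]$. Since $z_c(1)=0$ and $\psi(1)=\varphi(1)^{1/p'}\ge 0$, we have $\omega(1)=0$, and the a.e.~monotonicity together with $\omega\ge 0$ forces $\omega\equiv 0$ on $[0,1]$. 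This yields $z_c\le\psi$, and raising to the $p'$-th power gives $y_c\le\varphi$. Part (b) is symmetric with $\tilde\omega(t):=\bigl((\psi(t)-z_c(t))^+\bigr)^2$; the hypothesis $\varphi(1)\le 0$ combined with $\varphi>0$ on $(0,1)$ and continuity forces $\varphi(1)=0$, so $\tilde\omega(1)=0$, and the analogous computation gives $\tilde\omega\equiv 0$.

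The main technical nuisance I would expect is the fact that the map $y\mapsto y^{1/p'}$ is only Hölder (not Lipschitz) at $0$, so $z_c$ and $\psi$ are absolutely continuous only on compact subintervals of $(0,1)$. I would handle this by integrating $\omega'\ge 0$ over $[t,1-\epsilon]$ with $0<t<1-\epsilon<1$ to obtain $\omega(t)\le\omega(1-\epsilon)$, and then let $\epsilon\to 0^+$, invoking continuity of $\omega$ at $t=1$ (where $\omega(1)=0$) to conclude $\omega(t)\le 0$, and finally extending to $t=0$ by continuity. Everything else reduces to the a.e.~chain rule for the composition of an absolutely continuous function with a smooth function on a compact interval bounded away from zero, which is standard.
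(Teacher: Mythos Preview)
Your proposal is correct and follows essentially the same route as the paper: both pass to the $p'$-th roots $z_c=y_c^{1/p'}$ and $\psi=\varphi^{1/p'}$, so that the $(c-h)$ drift cancels in the difference and only the strictly decreasing map $\zeta\mapsto -f/\zeta^{1/(p-1)}$ survives. The only cosmetic difference is that the paper runs a direct contradiction argument on $\omega=\psi-z_c$ (propagating $\omega(t_0)<0$ forward to $\omega(1)<0$), whereas you package the same monotonicity via $\omega=((z_c-\psi)^+)^2$ and its nonnegativity; your extra care with the $[t,1-\varepsilon]$ localization near the endpoints is a welcome detail that the paper leaves implicit.
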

\begin{proof} We only prove $(a)$. The proof of $(b)$ follows similarly. We first notice from Lemma \ref{positiveness} that $y_c=y_c(t) >0$ in $(0,1)$. Similarly as in Lemma \ref{Uniqueness}, we set
\be
z_1(t):=(\varphi(t))^{\frac{1}{p'}}, \quad  z_2(t):=(y_c(t))^{\frac{1}{p'}}, \quad \text{and} \quad  \o(t):=z_1(t)-z_2(t)
\notag
\ee
for all $t \in (0,1)$. In order to verify inequality $\o(t)\geq 0$ in $(0,1)$, we assume by contradiction that there exists $t_0 \in (0,1)$ such that $\o(t_0) < 0$. Let $t_1 \in (t_0, 1]$ be such that $\o(t) \leq 0$ on $(t_0, t_1]$. It follows from \eqref{super} that for a.e. $t \in (0,1)$,
\be
z_1'(t) \leq c-h(t)-\frac{f(t)}{(z_1(t))^{1/(p-1)}} \quad \text{and} \quad
z_2'(t)  =c-h(t)-\frac{f(t)}{(z_2(t))^{1/(p-1)}},
\notag
\ee
from which we deduce that
\be
\o'(t)=z_1'(t)-z_2'(t) \leq -f(t)\Big(\frac{1}{(z_1(t))^{1/(p-1)}}-\frac{1}{(z_2(t))^{1/(p-1)}} \Big) \leq 0 \quad \text{a.e. in $(t_0, t_1]$},
\notag
\ee
and consequently $\o(t_1) \leq \o(t_0)<0$. Applying the same argument repeatedly if necessary, we conclude that $\o(1)<0$, in contradiction to the fact that $\o(1) \geq 0$.
\end{proof}

\begin{remark} In Lemma \ref{lecom}$(a)$, the assumption $\varphi>0$ in $(0,1)$ can be removed. It can be proved from \eqref{super} by the similar argument to Lemma \ref{positiveness}. Unlike $(a)$, the positivity assumption of $\varphi$ in $(b)$ cannot be removed. It seems to give more restriction as compared to \cite{DZ22}, but all functions $\varphi$ we will construct for the comparison are positive.
\end{remark}

\smallskip

From Lemma \ref{lecom}$(a)$, we see that the function $c \mapsto y_c(t): \RR \rightarrow \RR$ is monotone decreasing.

\begin{corollary} \label{cocom} Let $c_1, c_2 \in \RR$, and $y_{c_i}$ be a solution of \eqref{y1} for $i=1, 2$. If $c_2 \geq c_1$, then $y_{c_1}(t) \geq y_{c_2}(t)$ for all $t \in [0,1]$.
\end{corollary}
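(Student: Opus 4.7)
The plan is to view the corollary as a direct application of the comparison principle in Lemma \ref{lecom}(a), taking the comparison function to be $\varphi := y_{c_1}$ and running the defect computation at parameter $c := c_2$. Since $y_{c_1}$ already solves the backward initial value problem \eqref{y1} with $c = c_1$, the two hypotheses of Lemma \ref{lecom}(a) should fall out almost immediately from the choice $c_2 \geq c_1$.

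First I would verify that $\varphi = y_{c_1}$ meets the standing positivity assumption of Lemma \ref{lecom}: by Lemma \ref{positiveness}, $y_{c_1}(t) > 0$ for all $t \in (0,1)$. The boundary value is $\varphi(1) = y_{c_1}(1) = 0$, which satisfies $\varphi(1) \geq 0$. The key step is then the sign of the defect $P_{c_2} \varphi$ defined in \eqref{defect}. Because $y_{c_1}'(t) = p'\bigl[(c_1 - h(t))(y_{c_1}^+(t))^{1/p} - f(t)\bigr]$ a.e., subtracting $\Gamma(t, y_{c_1}(t), c_2)$ yields
\begin{equation*}
P_{c_2} y_{c_1}(t) \;=\; p'\bigl[(c_1 - h(t)) - (c_2 - h(t))\bigr]\bigl(y_{c_1}^+(t)\bigr)^{1/p} \;=\; p'(c_1 - c_2)\bigl(y_{c_1}^+(t)\bigr)^{1/p}.
\end{equation*}
Since $c_2 \geq c_1$ by hypothesis and $(y_{c_1}^+)^{1/p} \geq 0$, this defect is $\leq 0$ a.e. in $[0,1]$. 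Thus all conditions \eqref{super} are satisfied, and Lemma \ref{lecom}(a) yields $y_{c_2}(t) \leq y_{c_1}(t)$ for every $t \in [0,1]$, which is precisely the claimed monotonicity.

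There is essentially no serious obstacle at this stage — the technical core (the positivity of $y_c$ in Lemma \ref{positiveness}, the substitution $z = y^{1/p'}$ that turns the nonlinearity into a monotone-in-$z$ expression, and the one-sided comparison via the defect in Lemma \ref{lecom}) has already been carried out. The only point worth double-checking is that the inequality $P_{c_2} y_{c_1} \leq 0$ is picked up in the \emph{a.e.} sense required by \eqref{super}, which is automatic since $y_{c_1}$ is absolutely continuous and satisfies its Carathéodory ODE a.e. Thus the argument is a two-line verification once the earlier machinery is in place.
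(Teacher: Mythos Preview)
Your proof is correct and follows essentially the same approach as the paper: both take $\varphi = y_{c_1}$, invoke Lemma~\ref{positiveness} for positivity, compute $P_{c_2}y_{c_1} = p'(c_1-c_2)(y_{c_1}^+)^{1/p} \leq 0$, and conclude via Lemma~\ref{lecom}(a).
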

\begin{proof} We first notice from Lemma \ref{positiveness} that $y_{c_i}>0$ in $(0,1)$ for $i=1, 2$.   Recalling \eqref{defect}, it follows from $P_{c_1}y_{c_1}=0$ that
\be
\begin{split}
P_{c_2}y_{c_1}
& = y_{c_1}'-\Gamma(t, y_{c_1},c_2) \\
& =y_{c_1}'-\Gamma(t, y_{c_1},c_1)+\Gamma(t, y_{c_1},c_1)-\Gamma(t, y_{c_1},c_2) \\
& = \Gamma(t, y_{c_1},c_1)-\Gamma(t, y_{c_1},c_2)  \\
& = p' \left[ (c_1-h(t)) (y_{c_1})^{\frac{1}{p}} -f(t)  \right]-p' \left[ (c_2-h(t)) (y_{c_1})^{\frac{1}{p}} -f(t)  \right] \\
& = p'(c_1-c_2)(y_{c_1})^{\frac{1}{p}} \leq 0.
\end{split}
\notag
\ee
Therefore, by Lemma \ref{lecom}(a),   $y_{c_1}(t) \geq y_{c_2}(t)$ for all $t \in [0,1]$.
\end{proof}

\smallskip

We are now ready to prove the existence and nonexistence results for the first order boundary value problem \eqref{ode}. By modifying the proofs for the case $h \equiv 0$ (\cite[Theorem 4.7 and 4.8]{DZ22}), we determine the threshold wave speed $c^*$ for the existence and nonexistence of solutions of \eqref{ode}.  Before stating we recall notations $h_M$ and $h_m$ from \eqref{maxminh}.

\begin{theorem} [Existence] \label{exthm}
Let $f>0$ be lower semicontinuous in $(0,1)$, and $h$ satisfies \textbf{(H3)}. Set
\be \label{mu}
0< \mu := \sup_{t\in (0,1)} \frac{f(t)}{t^{p'-1}} < + \infty.
\ee
Then, in the case $h(0)>H(1)$, there exists a number
\be\label{c*}
c^* \in [ h(0), ~ h_M +(p')^{\frac{1}{p'}} p^{\frac{1}{p} }\mu^{\frac{1}{p'}}]
\ee
such that the boundary value problem \eqref{ode} has a unique positive solution if and only if $c\geq c^*$. In the case $h(0) \leq H(1)$, the same result holds for
\be \label{H(1)}
c^* \in (H(1), ~h_M+(p')^{\frac{1}{p'}} p^{\frac{1}{p} }\mu^{\frac{1}{p'}}  ].
\ee
\end{theorem}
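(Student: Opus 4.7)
The plan is to reduce the boundary value problem \eqref{ode} to the backward initial value problem \eqref{y1} already analyzed in this section. Any positive solution $y$ of \eqref{ode} satisfies $y(1)=0$ and the ODE a.e., so Lemma \ref{Uniqueness} forces $y \equiv y_c$. Consequently, \eqref{ode} admits a unique positive solution precisely when $y_c(0)=0$, and I would define
$$c^* := \inf\{c \in \mathbb{R} : y_c(0) = 0\},$$
then show the infimum is attained, that $\{c : y_c(0)=0\} = [c^*,+\infty)$, and that $c^*$ lies in the stated intervals.

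The central step is to produce, for $c$ sufficiently large, an explicit supersolution of the form $\varphi(t) := A\, t^{p'}$ with $A>0$. Using $p'/p = p'-1$, a direct computation gives
$$P_c \varphi(t) = p'\, t^{p'-1}\bigl[\,A - (c-h(t))\, A^{1/p}\,\bigr] + p'\, f(t).$$
With $h(t) \leq h_M$ and $f(t) \leq \mu\, t^{p'-1}$ by \eqref{mu}, the inequality $P_c \varphi \leq 0$ on $(0,1)$ reduces to the algebraic condition $(c-h_M)\, A^{1/p} - A \geq \mu$. Maximizing the left-hand side in $A$ at $A = ((c-h_M)/p)^{p'}$ and invoking the identity $p/(p-1)^{1/p'} = (p')^{1/p'} p^{1/p}$, this is feasible exactly when $c \geq h_M + (p')^{1/p'} p^{1/p} \mu^{1/p'}$. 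For such $c$, Lemma \ref{lecom}(a) (with $\varphi(1)=A>0$) yields $y_c \leq \varphi$ on $[0,1]$; combined with $y_c(0) \geq 0$ (Lemma \ref{positiveness} and continuity), this forces $y_c(0)=0$. Hence $c^* \leq h_M + (p')^{1/p'} p^{1/p} \mu^{1/p'}$, giving the upper bound in both \eqref{c*} and \eqref{H(1)}.

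Upward closure of $\{c : y_c(0)=0\}$ is immediate from Corollary \ref{cocom}: if $c_1 \leq c_2$ and $y_{c_1}(0)=0$, then $0 \leq y_{c_2}(0) \leq y_{c_1}(0) = 0$. Picking a sequence $c_n \downarrow c^*$ in this set and applying Lemma \ref{Cdop} gives $y_{c^*}(0) = \lim y_{c_n}(0) = 0$, so $c^*$ is attained and the set equals $[c^*,+\infty)$. For the lower bounds, Lemma \ref{lemnc2} applied to $y_{c^*}$ delivers $c^* \geq h(0)$, and dividing \eqref{ode} by $y_{c^*}^{1/p}$ and integrating over $[0,1]$ (as in the remark after Lemma \ref{lemnc2}) forces the strict inequality $c^* > H(1)$. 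According to whether $h(0) > H(1)$ or $h(0) \leq H(1)$, the dominant bound gives either \eqref{c*} or \eqref{H(1)}. Uniqueness of the positive solution for each $c \geq c^*$ is then exactly Lemma \ref{Uniqueness}, while nonexistence for $c < c^*$ is built into the definition of $c^*$.

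The main obstacle is the supersolution construction. The polynomial ansatz $\varphi = A\, t^{p'}$ is dictated by scaling: one needs $\varphi^{1/p}$ proportional to $t^{p'-1}$ so as to balance $\varphi'$ against the bound on $f$, which pins the exponent to $p'$ and then leaves $A$ as the single parameter to be optimized. Because the bound uses only the scalar $h_M$ and because Lemma \ref{lecom} is formulated with the a.e.\ inequality $P_c\varphi \leq 0$, the discontinuities of $h$ at the points $k_j$ cause no essential difficulty and are absorbed cleanly.
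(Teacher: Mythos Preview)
Your proposal is correct and follows essentially the same route as the paper: the same supersolution $\varphi(t)=A t^{p'}$ with the same optimization in $A$, the same application of Lemma~\ref{lecom}(a), and the same use of continuous dependence (Lemma~\ref{Cdop}) together with the necessary conditions (Lemma~\ref{lemnc2} and \eqref{c>H(1)}) to locate $c^*$. The only cosmetic difference is that you define $c^*$ over all $c\in\mathbb{R}$ and recover the constraints $c^*\ge h(0)$, $c^*>H(1)$ afterward, whereas the paper builds them into the definition of $c^*$; you also make the upward-closure step via Corollary~\ref{cocom} explicit, which the paper leaves implicit.
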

\begin{proof} We notice that the condition \eqref{mu} implies that $f \in L^1(0,1)$. First, we claim that $y_c(0) =0 $ for $c$ sufficiently large, that is, a solution $y_c$ of \eqref{y1} turns out to be a positive solution of \eqref{ode} for $c$ sufficiently large. For $c > h_M$, set
\be
\phi_c (s):= (c- h_M) s^{ \frac{1}{p} } -s, \quad s \in (0, (c-h_M)^{p'}).
\notag
\ee
Then $\phi_c(s) >0$ for all $s \in (0,  (c- h_M)^{p'})$, and $\phi_c(0) = \phi_c( (c-h_M)^{p'})=0$. Also, we readily check that $\phi_c$ attains the maximum value
$$M_c := \left( \frac{c-h_M}{p} \right)^{p'}(p-1)~  \mbox{at the point} ~ k:= \left(\frac{c- h_M}{p} \right)^{p'} \in (0,  (c- h_M)^{p'}).$$
A short calculation shows that $c- h_M \geq (p')^{\frac{1}{p'}} p^{\frac{1}{p}} \mu^{\frac{1}{p'}}$ if and only if $M_c \geq \mu$, from which we deduce that if $c \geq h_M+ (p')^{\frac{1}{p'}} p^{\frac{1}{p}} \mu^{\frac{1}{p'}}$ then $\phi_c (k) \geq \mu$, that is,
\begin{equation} \label{bd}
(c- h_M) k^{\frac{1}{p}} -\mu \geq k.
\end{equation}
Let $\varphi(t) := kt^{p'}$.  Then $\varphi(1) >0$, and recalling \eqref{defect},
\be
\begin{split}
P_c \varphi
& = kp' t^{p'-1} -p' \Big[(c-h(t))(\varphi(t))^{\frac{1}{p}} -f(t)\Big]\\
& \leq \Big[ (c-h_M) k^{\frac{1}{p}} -\mu \Big]p' t^{p'-1} -p' \Big[(c-h_M)(\varphi(t))^{\frac{1}{p}} -f(t)\Big] \\
& \leq \Big[ (c-h_M) k^{\frac{1}{p}} -\mu \Big]p' t^{p'-1} -p' \Big[(c-h_M)(\varphi(t))^{\frac{1}{p}} - \mu t^{p'-1}  \Big] \\
& =0.
\end{split}
\notag
\ee
Here, we have used \eqref{mu} and \eqref{bd}. Then it follows by Lemma \ref{lecom}(a) that
\be
0 \leq y_c(t ) \leq \varphi(t), \quad   t \in [0,1].
\notag
\ee
Since $\varphi(0)=0$, we conclude that $y_c(0)=0$. Therefore, for any $c \geq   h_M+(p')^{\frac{1}{p'}} p^{\frac{1}{p}} \mu^{\frac{1}{p'}}$, a unique solution $y_c=y_c(t)$ of \eqref{y1} is also a positive solution of \eqref{ode}.

Now, recalling \eqref{ncs}, set
\be
c^* :=  \inf \{ c \geq h(0) ~ \text{and} ~ c>H(1) : ~ \mbox{\eqref{ode} has a unique positive solution}\}.
\notag
\ee
Then we immediately see that $\max\{h(0), H(1)\} \leq c^* \leq  h_M+(p')^{\frac{1}{p'}} p^{\frac{1}{p}} \mu^{\frac{1}{p'}}$. Let $c_n \geq c_*$ be a sequence that converges to $c_*$ and $y_{c_n}=y_{c_n}(t)$ be a positive solution of \eqref{ode}. It follows from Lemma \ref{Cdop} that $y_{c_n}$ converges to $y_{c^*}$ uniformly in $[0,1]$, and hence $y_{c^*}$ is also a positive solution of \eqref{ode}. Consequently, \eqref{ode} has a unique positive solution if and only if $c \geq c^*$. However, from \eqref{c>H(1)} we get $c \neq H(1)$, so that we arrive at \eqref{H(1)} in the case $h(0) \leq H(1)$.
\end{proof}

\begin{remark} As established in \cite{DZ22}, if $h(t)=0$ in $[0,1]$, then the threshold satisfies
\be
c^* \in (0, ~(p')^{\frac{1}{p'}} p^{\frac{1}{p} }\mu^{\frac{1}{p'}}]
\notag
\ee
which is the special case of \eqref{H(1)}.
\end{remark}

\smallskip

\noindent
\textbf{Proof of Theorem \ref{Existence_U}.} Under the assumptions \textbf{(H1)}--\textbf{(H3)} the proof follows immediately from Proposition \ref{prop} and Theorem \ref{exthm}. In particular, the function $f(t)=(d(t))^{\frac{1}{p-1}}g(t)$ satisfies all the hypotheses of Theorem \ref{exthm} and the properties of $U$ are shown in derivation of Proposition \ref{prop}. As defined in \eqref{z(U)},  $U(0)=\frac{1}{2}$ is a normalized condition. Indeed, by translation invariance of \eqref{eqU}, if $U=U(z)$ is a solution of \eqref{eqU} and \eqref{bc}, then the function $z \mapsto U(\eta+z)$ is also a solution for any fixed $\eta \in \RR$.

\smallskip

\begin{theorem}[Nonexistence] \label{Nonexistence} Let $f>0$ be lower semicontinuous in $(0,1)$, $f \in L^1(0,1)$, and $h$ satisfies \textbf{(H3)}. Set
\be \label{nu}
0< \nu:=\liminf_{t \rightarrow 0^+}\frac{f(t)}{t^{p'-1}}.
\ee
If
\be \label{condition_nonexist}
h(0) < c < h(0)+(p')^{\frac{1}{p'}}p^{\frac{1}{p}}\nu^{\frac{1}{p'}},
\ee
then the boundary value problem \eqref{ode} has no positive solution. In particular, if $\nu =+\infty$, then the problem \eqref{ode} has no positive solution for any $c >h(0)$.
\end{theorem}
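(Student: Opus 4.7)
The plan is to argue by contradiction. Suppose $y = y_c$ is a positive solution of \eqref{ode} with $c$ in the range \eqref{condition_nonexist}. Setting $z(t) := (y(t))^{1/p'}$ as in the proof of Lemma \ref{Uniqueness}, one obtains $z \in C[0,1]$ with $z(0) = z(1) = 0$, $z > 0$ on $(0,1)$, and
\[
z'(t) = c - h(t) - \frac{f(t)}{z(t)^{1/(p-1)}} \quad \text{a.e.\ in } (0,1).
\]
This substitution removes the non-Lipschitz factor $(y^+)^{1/p}$ and is legitimate by the strict positivity of $y$ on the open interval; since $1/(p-1) = p'-1$, all powers below are in terms of $p'$.

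The next step is to rewrite \eqref{condition_nonexist} as a quantitative gap. For $b \geq 0$ put $\phi_b(k) := b k^{1/p} - k$ for $k \geq 0$. A direct computation shows $\max_{k \geq 0} \phi_b(k) = b^{p'}/(p'\, p^{p'-1})$, attained at $k = (b/p)^{p'}$, and the upper bound in \eqref{condition_nonexist} is precisely $\max \phi_{c-h(0)} < \nu$. By continuity I fix $\epsilon > 0$ small enough and $\nu_1 \in (\max \phi_{c-h(0)+\epsilon},\, \nu)$. The continuity of $h$ at $0$, the lower semicontinuity of $f$, and the $\liminf$ hypothesis then yield $\delta > 0$ such that $h(t) > h(0) - \epsilon$ and $f(t) \geq \nu_1 t^{p'-1}$ on $(0,\delta)$.

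The heart of the argument is to control $M(t) := \sup_{s \in (0,t]} z(s)/s$ for $t \in (0, \delta)$ and deduce $\phi_{c-h(0)+\epsilon}(M(t)^{p'}) \geq \nu_1$, contradicting the choice of $\nu_1$. First, discarding the singular term gives $z'(t) \leq c - h(0) + \epsilon$ a.e., hence $z(t) \leq (c - h(0) + \epsilon)\,t$; combined with $z > 0$, this makes $M(t)$ finite and positive. Second, integrating the full ODE from $0$ to an arbitrary $s \in (0, t]$ and using $f(\tau) \geq \nu_1 \tau^{p'-1}$ gives
\[
z(s) \leq (c - h(0) + \epsilon)\,s - \nu_1 \int_0^s \left(\frac{\tau}{z(\tau)}\right)^{p'-1} d\tau \leq (c - h(0) + \epsilon)\,s - \frac{\nu_1\, s}{M(t)^{p'-1}},
\]
where the last step uses $z(\tau)/\tau \leq M(t)$ for $\tau \leq s \leq t$. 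Dividing by $s$ and taking the supremum over $s \in (0, t]$ yields
\[
M(t) + \frac{\nu_1}{M(t)^{p'-1}} \leq c - h(0) + \epsilon,
\]
which, on setting $K := M(t)^{p'}$ and using $M(t)^{p'-1} = K^{1/p}$, reads exactly $\phi_{c-h(0)+\epsilon}(K) \geq \nu_1$, the desired contradiction. The case $\nu = +\infty$ reduces to the same argument with $\nu_1$ taken arbitrarily large, forcing a contradiction for every $c > h(0)$. The main obstacle I foresee is rigorously passing from the pointwise ODE inequality for $z$ to the closed-form algebraic inequality for $M(t)$; this hinges on monotonicity of $M$ together with the elementary a priori bound $z(t) \leq (c - h(0) + \epsilon)\,t$ near $0$, which is exactly what keeps $z(s)/s$ bounded and allows $M$ to be inserted under the integral sign.
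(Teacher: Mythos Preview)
Your argument is correct and takes a genuinely different route from the paper's. The paper works directly with $y$ and builds a monotone integral operator $T(u)(t)=p'\int_0^t[(c-h)(u^+)^{1/p}-f]$ on $C[0,t_0]$; starting from the supersolution $\tilde y_0(t)=L^{p'}t^{p'}$ it iterates downward to the greatest fixed point (invoking \cite[Theorem~6.3.16]{DM}), then tracks the iterates by a scalar recursion $a_n=a_{n-1}^{1/p}-\tilde\nu$ whose limit forces $\tilde\nu\le 1/(p'p^{p'-1})$, a contradiction. You instead pass to $z=y^{1/p'}$ (already used in Lemma~\ref{Uniqueness}) and close the argument in one step via the quantity $M(t)=\sup_{s\le t}z(s)/s$, arriving at the clean algebraic inequality $\phi_{c-h(0)+\epsilon}(M(t)^{p'})\ge \nu_1>\max\phi_{c-h(0)+\epsilon}$. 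Your approach is shorter and self-contained, avoiding the fixed-point reference; the paper's iteration makes the mechanism (successive improvement of the upper barrier) more visible and ties in with the existence proof's use of the same function $\phi_b$.

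One technical point worth making explicit: when you ``integrate from $0$'' you are implicitly using that $z$ is absolutely continuous up to the endpoint. Since $y$ is AC on $[0,1]$ but $z=y^{1/p'}$ need only be AC on compact subsets of $(0,1)$, the clean justification is to integrate on $[a,s]$ and let $a\to0^+$: the term $\int_a^s f/z^{p'-1}$ is nonnegative and monotone in $a$, so its limit exists (and is finite because $z(s)$ and $\int_0^s(c-h)$ are), which yields both $z(s)\le(c-h(0)+\epsilon)s$ and the full estimate you need. You flag this at the end; it deserves one sentence in the body rather than a caveat.
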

\begin{proof} We argue by contradiction. Fix any $c \in \mathbb{R}$ that satisfies \eqref{condition_nonexist}. Assume that the boundary value problem \eqref{ode} has a positive solution $y_c=y_c(t)$ in $(0,1)$.  Since $0 < c -h(0)< (p')^{\frac{1}{p'}}p^{\frac{1}{p}}\nu^{\frac{1}{p'}}$,  there exists a constant $L>0$ such that 
\be
0 < c -h(0)< L<(p')^{\frac{1}{p'}}p^{\frac{1}{p}}\nu^{\frac{1}{p'}}. 
\notag
\ee
By the continuity of $h$ at $0$, there exist $t_0 \in (0,1]$ such that  
\be \label{range of c-h(t)}
0<c-h(t)<L<(p')^{\frac{1}{p'}}p^{\frac{1}{p}}\nu^{\frac{1}{p'}} \quad \text{for all  $t \in [0, t_0]$}. 
\ee
Since $y_c$ is also a solution of \eqref{y1}, it follows from the uniqueness result that the restricted function $\tilde y_c:=y_c|_{[0, t_0]}$ is a unique solution of the following backward initial value problem
\be
\begin{cases}
\tilde y'(t) = p' \left[ (c-h(t)) (\tilde y^{+} (t))^{\frac{1}{p}} -f(t)  \right] \quad \text{a.e. in $(0,t_0)$},\\
\tilde y(t_0)=y_c(t_0).
\end{cases}
\notag
\ee
We now define a operator $T: C[0,t_0] \rightarrow C[0, t_0]$ as
\be
T(u)(t):=p'\int_0^t\Big[(c-h(\tau))(u^+(\tau))^\frac{1}{p}-f(\tau)\Big] d\tau, \quad u \in C[0,t_0].
\notag
\ee
Then $\tilde y_c$ is definitely a fixed point of the operator $T$ in $C[0,t_0]$. Owing to the fact that $c-h(t) > 0$ on $[0,t_0)$,  we conclude that for any $u_1\leq u_2 $ on $[0,t_0]$,
\be
T(u_1)(t)-T(u_2)(t)=p'\int_0^t\Big[(c-h(\tau))\Big((u_1^+(\tau))^\frac{1}{p}-(u_2^+(\tau))^\frac{1}{p}\Big) \Big] d\tau \leq 0,
\notag
\ee
that is, $T$ is monotone increasing on $C[0,t_0]$. Set
\be
\tilde y_0(t):=L^{p'}t^{p'}, \quad t \in [0,t_0]. 
\notag
\ee
Due to \eqref{range of c-h(t)}, we obtain that for all $t \in [0, t_0]$, 
\be
\begin{split}
T(\tilde y_0)(t)
& = p'\int_0^t\Big[(c-h(\tau))  L^{\frac{p'}{p}}\tau^{\frac{p'}{p}} \Big] d\tau-p'\int_0^t f(\tau) d\tau \\
& \leq p'L^{1+\frac{p'}{p}}\int_0^t   \tau^{\frac{p'}{p}} d\tau   \\
& = \tilde y_0(t). 
\end{split}
\notag
\ee
Then, by the monotonicity of $T$ on $C[0,t_0]$, a sequence defined as
\be \label{seq_y_n}
\tilde y_{n+1}=T(\tilde y_n), \quad n=0, 1, 2, \cdots,
\ee
satisfies that
\be
\tilde y_0(t) \geq \tilde y_1(t) \geq \cdots \geq \tilde y_n(t) \geq \cdots,  \quad t \in (0,t_0),
\notag
\ee
and moreover it is bounded below; namely
\be
\tilde y_{n}=T(\tilde y_{n-1}) \geq -p'\int_0^t f(\tau) d\tau, \quad n \in \mathbb{N}.
\notag
\ee
According to \cite[Theorem 6.3.16]{DM}, the sequence \eqref{seq_y_n} converges to the greatest fixed point of $T$. From the fact that $\tilde y_c$ is a fixed point of $T$, we deduce that
\be \label{mtildey}
\tilde y_0(t) \geq \tilde y_1(t) \geq \cdots \geq \tilde y_n(t) \geq \cdots \geq \tilde y_c(t) >0, \quad t \in (0,t_0).
\ee

\smallskip

In order to obtain a contradiction we notice from \eqref{nu} and \eqref{range of c-h(t)} that there exists $\delta \in (0,t_0]$ and $\tilde \nu \in \big(\frac{1}{p'p^{p'-1}}, 1\Big)$ such that
\be \label{ineq_f_nu}
f(t) \geq \tilde \nu L^{p'}t^{p'-1}, \quad t \in (0, \delta).
\ee
Recalling \eqref{seq_y_n} and using \eqref{ineq_f_nu}, a direct computation gives that
\be
\tilde y_1(t) \leq L^{p'}t^{p'}(1-\tilde \nu),  \quad t \in (0, \delta),
\notag
\ee
and
\be
\tilde y_2(t) \leq L^{p'}t^{p'}\Big[(1-\tilde \nu)^{\frac{1}{p}}-\tilde \nu \Big], \quad t \in (0, \delta).
\notag
\ee
In general, for each $n \in \mathbb{N}$, we obtain
\be \label{mtildey2}
\tilde y_n(t) \leq a_n L^{p'}t^{p'}, \quad  \quad t \in (0, \delta),
\ee
where $a_n$ is a sequence defined as
\be \label{seq_a_n}
a_0=1, \quad a_n=(a_{n-1})^{\frac{1}{p}}-\tilde \nu.
\ee
It follows from \eqref{mtildey} and \eqref{mtildey2} that the sequence $\{ a_n \}$ is decreasing and bounded below by $\tilde y_c$, and hence it converges to $a_{\infty} \in (0,1)$ that satisfies $\tilde \nu=a_{\infty}^{\frac{1}{p}}-a_{\infty}=a_{\infty}^{\frac{1}{p}}(1-a_{\infty})^{\frac{1}{p'}}$. A simple calculation shows that
\be
\tilde \nu \leq \max_{x \in (0,1)} x^{\frac{1}{p}}(1-x)^{\frac{1}{p'}}=\frac{1}{p'p^{p'-1}},
\notag
\ee
a contradiction to the fact that $\tilde \nu \in \big(\frac{1}{p'p^{p'-1}}, 1\big)$.
\end{proof}


\begin{remark} If $H(1) < h(0)+(p')^{\frac{1}{p'}}p^{\frac{1}{p}}\nu^{\frac{1}{p'}}$, then we conclude from Theorem \ref{exthm} and Theorem \ref{Nonexistence} that the threshold speed $c^*$ satisfies
\be \label{range of c^*}
h(0)+(p')^{\frac{1}{p'}}p^{\frac{1}{p}}\nu^{\frac{1}{p'}} \leq c^* \leq  h_M+(p')^{\frac{1}{p'}} p^{\frac{1}{p} }\mu^{\frac{1}{p'}}.
\ee
Otherwise, i.e., if $h(0)+(p')^{\frac{1}{p'}}p^{\frac{1}{p}}\nu^{\frac{1}{p'}} \leq H(1)$, then the region of $c^*$ is determined only by \eqref{H(1)}. Figure \ref{figure_c*} depicts the region of $c^*$.
\end{remark}

\begin{remark} The situation described by  \eqref{range of c^*} with $p=p'=2$ occurs, for instance, if we take
\be
g(u)=
\begin{cases}
u^2, & 0 \leq u \leq \frac{1}{2}, \\
\frac{1}{2}(1-u), & \frac{1}{2} \leq u \leq 1, 
\end{cases}
\quad d(u)=
\begin{cases}
\frac{1}{6u}, & 0 \leq u \leq \frac{1}{3}, \\
4u(1-u), & \frac{1}{3} < u \leq 1, 
\end{cases}
\notag
\ee
and
\be
h(u)=
\begin{cases}
u+\frac{1}{2}, & 0 \leq u \leq \frac{1}{2}, \\
\frac{3}{4}-\frac{u}{2}, & \frac{1}{2} < u \leq 1, 
\end{cases}
\notag
\ee
in which case $\mu=\frac{1}{2}$, $\nu=\frac{1}{6}$, $h(0)=\frac{1}{2}$, $h_M=1$, and $H(1)=\frac{9}{16}$. Since $\frac{9}{16}=H(1) < h(0)+(p')^{\frac{1}{p'}}p^{\frac{1}{p}}\nu^{\frac{1}{p'}}=\frac{3+2\sqrt{6}}{6}$, the threshold speed $c^*$ satisfies $\frac{3+2\sqrt{6}}{6} \leq c^* \leq 1+\sqrt{2} $. 
\end{remark}

\smallskip

\noindent
As in the case of the existence result, the \textbf{proof of Theorem \ref{Nonexistence_U}} follows from Proposition \ref{prop} and Theorem \ref{Nonexistence}.

\smallskip


\begin{figure}
  \centering
 \includegraphics[scale=0.87]{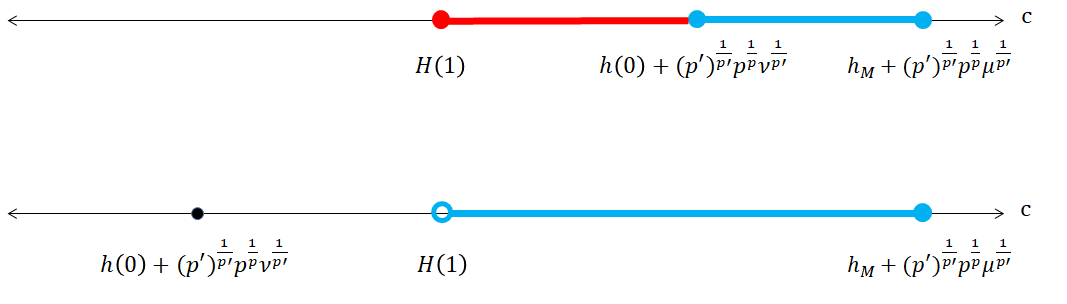}
\caption{The region of the threshold speed $c^*$, as discussed in Theorem \ref{exthm}, is shown as a blue line. In the case that $H(1) < h(0)+(p')^{\frac{1}{p'}}p^{\frac{1}{p}}\nu^{\frac{1}{p'}}$ (top figure), the problem \eqref{ode} has no positive solution for any $c$ between $H(1)$ and $h(0)+(p')^{\frac{1}{p'}}p^{\frac{1}{p}}\nu^{\frac{1}{p'}}$ (red line), so that the region of $c^*$ described in Theorem \ref{exthm} is reduced to \eqref{range of c^*}. Otherwise, the nonexistence result does not effect the region of $c^*$ (bottom figure).}
 \label{figure_c*}
\end{figure}

\smallskip

We now consider the special case when the threshold wave speed $c^*$ becomes $h(0)$.

\begin{theorem}[Threshold $c^*=h(0)$] \label{threshold2} Suppose that all the hypotheses in Theorem \ref{exthm} hold.  If $h(0) > h(t)$ for all $t \in (0, 1)$ and 
\be \label{threshold_h(0)}
f(t) \leq \frac{1}{p'p^{p'/p}} (h(0)-h(t)) \Big[\int_0^t (h(0)-h(\tau)) d\tau \Big]^{\frac{p'}{p}}, \quad  t \in [0,1],
\ee
then $c^*=h(0)$.
\end{theorem}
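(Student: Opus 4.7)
The plan is to verify $c^{*}=h(0)$ by combining the lower bound already supplied by Lemma~\ref{lemnc2} (any positive solution requires $c\ge h(0)$) with an explicit construction of a positive solution of \eqref{ode} at the critical value $c=h(0)$. In view of Theorem~\ref{exthm}, this will be done once we exhibit a supersolution $\varphi$ for \eqref{y1} with $c=h(0)$ satisfying $\varphi(0)=0$ and $\varphi(1)\geq 0$, because Lemma~\ref{lecom}(a) would then force $0\le y_{h(0)}(t)\le \varphi(t)$, and evaluation at $t=0$ gives $y_{h(0)}(0)=0$.

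The shape of the bound in \eqref{threshold_h(0)} dictates the ansatz. Set
\[
G(t):=\int_0^t\bigl(h(0)-h(\tau)\bigr)\,d\tau,\qquad \varphi(t):=A\,G(t)^{p'},
\]
with a constant $A>0$ to be chosen. Since $h(0)>h(t)$ for $t\in(0,1)$, the function $G$ is strictly increasing, $G(0)=0$, $G(1)>0$, so $\varphi(0)=0$ and $\varphi(1)>0$. A direct computation, using $p'-1=p'/p$, gives
\[
\varphi'(t)=A p'\bigl(h(0)-h(t)\bigr)G(t)^{p'/p},\qquad \bigl(h(0)-h(t)\bigr)\varphi(t)^{1/p}=A^{1/p}\bigl(h(0)-h(t)\bigr)G(t)^{p'/p},
\]
so that the defect defined in \eqref{defect} becomes
\[
P_{h(0)}\varphi(t)=p'\bigl(h(0)-h(t)\bigr)G(t)^{p'/p}\bigl(A-A^{1/p}\bigr)+p'f(t).
\]
Hence $P_{h(0)}\varphi\le 0$ is equivalent to
\[
f(t)\le \bigl(A^{1/p}-A\bigr)\bigl(h(0)-h(t)\bigr)G(t)^{p'/p}.
\]

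To maximize the prefactor, the calculus exercise $\tfrac{d}{dA}(A^{1/p}-A)=\tfrac{1}{p}A^{1/p-1}-1=0$ yields the optimal choice $A=p^{-p'}$, for which $A^{1/p}-A=p^{-p'/p}\bigl(1-p^{-1}\bigr)=\tfrac{1}{p'p^{p'/p}}$. With this choice the inequality $P_{h(0)}\varphi\le 0$ is exactly the hypothesis \eqref{threshold_h(0)}, so $\varphi(t)=p^{-p'}G(t)^{p'}$ is a supersolution of \eqref{y1} at $c=h(0)$.

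Applying Lemma~\ref{lecom}(a) with this $\varphi$ gives $y_{h(0)}(t)\le \varphi(t)$ on $[0,1]$; combined with the positivity from Lemma~\ref{positiveness}, passing to $t\to 0^{+}$ yields $y_{h(0)}(0)=0=\varphi(0)$, i.e.\ $y_{h(0)}$ is a positive solution of the boundary value problem \eqref{ode}. Therefore $c^{*}\le h(0)$, and together with $c^{*}\ge h(0)$ we conclude $c^{*}=h(0)$. (Note that the hypothesis $h(0)>h(t)$ on $(0,1)$ together with (H3) implies $H(1)<h(0)$, so we are in the first alternative of Theorem~\ref{exthm} where $c^{*}=h(0)$ is indeed admissible.) The main obstacle is simply recognizing the correct ansatz; once the factor $\tfrac{1}{p'p^{p'/p}}$ in \eqref{threshold_h(0)} is read as the maximum of $A^{1/p}-A$, the proof is essentially a one-line verification.
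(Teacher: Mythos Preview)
Your proof is correct and essentially identical to the paper's: both construct the supersolution $\varphi(t)=p^{-p'}\bigl[\int_0^t(h(0)-h(\tau))\,d\tau\bigr]^{p'}$, verify $P_{h(0)}\varphi\le 0$ via \eqref{threshold_h(0)}, and apply Lemma~\ref{lecom}(a) to force $y_{h(0)}(0)=0$. The only difference is cosmetic---you motivate the constant $p^{-p'}$ by optimizing $A^{1/p}-A$, whereas the paper writes the supersolution directly as $K(t)=\bigl[\tfrac{1}{p}\int_0^t(h(0)-h(\tau))\,d\tau\bigr]^{p'}$ and checks the defect inequality by hand.
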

\begin{proof} We first notice that $H(1)<h(0)$. It is enough to prove that there is a positive solution $y_c$ of \eqref{ode} when $c=h(0)$. By Lemma \ref{cara_existence}--Lemma \ref{Uniqueness}, for $c=h(0)$ there exists a unique positive solution $y_c$ of the backward initial value problem \eqref{y1}.

Define a function
\be \label{K}
K(t)=\Big[ \frac{1}{p}\int_0^t (h(0)-h(\tau)) d\tau \Big]^{p'}, \quad \quad  t \in [0,1].
\ee
Then $K(t) > 0$ in $(0,1)$ and $K(1) \geq 0$. The inequality \eqref{threshold_h(0)} gives that
\be
\begin{split}
K'(t)
& =\frac{p'}{p} (h(0)-h(t)) \Big[\frac{1}{p}\int_0^t (h(0)-h(\tau)) d\tau \Big]^{\frac{p'}{p}} \\
& = p'(h(0)-h(t))\Big[\frac{1}{p}\int_0^t (h(0)-h(\tau)) d\tau \Big]^{\frac{p'}{p}}- (h(0)-h(t)) \Big[\frac{1}{p}\int_0^t (h(0)-h(\tau)) d\tau \Big]^{\frac{p'}{p}} \\
& = p'(h(0)-h(t))\Big[\frac{1}{p}\int_0^t (h(0)-h(\tau)) d\tau \Big]^{\frac{p'}{p}}-\frac{p'}{p'p^{p'/p}} (h(0)-h(t)) \Big[\int_0^t (h(0)-h(\tau)) d\tau \Big]^{\frac{p'}{p}} \\
& \leq  p'\Big[(h(0)-h(t))(K(t))^{\frac{1}{p}}-f(t)\Big]  \quad \text{a.e. in $[0,1]$},
\notag
\end{split}
\ee
that is, $P_{h(0)}K \leq 0$ a.e. in $[0,1]$. It follows from Lemma \ref{lecom} that $0 \leq y_{h(0)}(t) \leq K(t)$ on $[0,1]$. Since $K(0)=0$, we conclude that $y_{h(0)}(0)=0$, that is, $y_{h(0)}$ is also a unique positive solution of \eqref{ode}.
\end{proof}

\begin{remark} If the assumptions of Theorem \ref{threshold2} hold, then  $\nu$ should be $0$. If not, it follows from Theorem \ref{Nonexistence} that the boundary value problem \eqref{ode} has no positive solution for all $c \in (h(0), h(0) +(p')^{\frac{1}{p'}}p^{\frac{1}{p}}\nu^{\frac{1}{p'}})$, in contradiction to the conclusion $c^*=h(0)$ of Theorem \ref{threshold2}. Indeed, by direct calculation,
\be
\frac{f(t)}{t^{p'-1}}  \leq \frac{1}{p'p^{p'/p}} (h(0)-h(t)) \Big[\frac{1}{t}\int_0^t (h(0)-h(\tau)) d\tau \Big]^{\frac{p'}{p}} \leq \frac{1}{p'p^{p'/p}} (h(0)-h(t)) (2|h(0)|)^{\frac{p'}{p}}. 
\ee
Since $h$ is continuous at $0$, letting $t \rightarrow 0^+$ yields $\nu=0$. 
\end{remark}

\begin{remark}If $p=p'=2$, then the inequality \eqref{threshold_h(0)} becomes
\be
f(t) \leq \frac{1}{4} (h(0)-h(t)) \int_0^t (h(0)-h(\tau)) d\tau, \quad  t \in [0,1],
\notag
\ee
which is the same as the assumption for $c^*=h(0)$ established in \cite[Theorem 1.2]{MM05}.
\end{remark}

\section{Asymptotic analysis of the traveling wave} \label{asymptotic analysis}

As stated in Theorem \ref{Existence_U}, the traveling wave profile $U$ is strictly decreasing on $(z_0, z_1) \subseteq \RR$. In this section we provide a short observation of the asymptotic behaviors near $U=1$ (i.e., $z_0=-\infty$ or $z_0 > -\infty$) and $U=0$ (i.e., $z_1=+\infty$ or $z_1 < +\infty$). The convection term does not conclusively affect the asymptotic analyses, so that all the proofs in this section are omitted, referring the interested reader to \cite[Section 6]{DZ22} for details. Thanks to Lemma \ref{lecom}, their proofs can be easily extended to our case. We only mention where the convection term affects their proofs.

In what follows,  we assume that all the assumptions in Theorem \ref{Existence_U} hold for the existence of the profile $U$. Following \cite{DZ22}, we only consider the power--type functions for the behaviors of $g=g(t)$ and $d=d(t)$ near $t=0$ and $t=1$. For notational brevity, we write
\begin{center}
$g_1(t) \sim g_2(t)$ as $t \rightarrow t_0$ if and only if $\ds \lim_{t \rightarrow t_0} \frac{g_1(t)}{g_2(t)} \in (0, +\infty)$.
\end{center}

\subsection{Asymptotics near $1$}

In this subsection we assume that the functions $g$ and $d$ satisfy
\be
g(t) \sim (1-t)^\gamma \quad \text{and} \quad d(t)\sim (1-t)^\delta
\notag
\ee
as $t \rightarrow 1^-$ for some $\gamma>0$ and $\delta \in \RR$. Then we immediately see from \eqref{mu_U} that
\be
0 \leq \gamma+\frac{\delta}{p-1}.
\notag
\ee

\smallskip

We first state the asymptotics of the profile $U$ near $1$.

\begin{theorem} Let $c \geq c^*$ be fixed. Suppose that $\gamma>0$, $\delta \in \RR$,  and
\be
0 \leq \gamma+\frac{\delta}{p-1} \leq \frac{1}{p-1}.
\notag
\ee
If $\ds \frac{\gamma-\delta+1}{p}<1$, then $z_0>-\infty$ and $U'(z_0+)=0$; otherwise $z_0=-\infty$.
\end{theorem}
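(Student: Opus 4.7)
The plan is to translate the question back into the first-order ODE framework of Section \ref{E_N_y} and then read off the asymptotic rate of $y$ near $t=1$.

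By Proposition \ref{prop} and equation \eqref{z(U)}, we have the explicit formula
\[
z_0 = \lim_{U\to 1^-} z(U) = -\int_{1/2}^1 \left|\frac{d(s)}{w(s)}\right|^{\frac{1}{p-1}} ds = -\int_{1/2}^1 \frac{d(s)^{\frac{1}{p-1}}}{y(s)^{\frac{1}{p}}}\, ds,
\]
where the last equality uses $w = -y^{1/p'}$ and the identity $p'(p-1)=p$. Thus the dichotomy $z_0>-\infty$ vs.\ $z_0=-\infty$ is exactly the convergence or divergence of this integral at $s=1$. Similarly, from \eqref{dUdz},
\[
|U'(z)| = \frac{y(U(z))^{1/p}}{d(U(z))^{1/(p-1)}},
\]
so the claim $U'(z_0+)=0$ will be a consequence of the same asymptotic estimate for $y$.

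The main step is to determine $y(t)$ as $t\to 1^-$. Setting $\alpha := \gamma + \delta/(p-1)$, the hypotheses give $f(t)\sim (1-t)^{\alpha}$ with $0\leq \alpha \leq 1/(p-1)$. The plan is to use the comparison lemma \ref{lecom} with trial functions of the form $\varphi(t)=A(1-t)^{\beta}$. Plugging into the defect $P_c\varphi$ in \eqref{defect} and comparing the orders of $(1-t)^{\beta-1}$, $(c-h(t))(1-t)^{\beta/p}$ and $(1-t)^{\alpha}$, the natural balance in the regime $\alpha<1/(p-1)$ is between $\varphi'$ and the reaction $f$, which forces $\beta=\alpha+1$ with constant $A=p'C/(\alpha+1)$, while the convective term $(c-h(t))\varphi^{1/p}$ contributes only a lower-order perturbation. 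Slightly enlarging/shrinking $A$ produces a super- and subsolution of \eqref{y1} on a left neighborhood of $t=1$, and Lemma \ref{lecom} (combined with the positivity from Lemma \ref{positiveness}) then pins $y(t)\sim C'(1-t)^{\alpha+1}$. The boundary case $\alpha=1/(p-1)$ is handled analogously after a logarithmic correction; here the convective term genuinely enters the leading balance, but under our hypothesis one still gets $y(t)^{1/p}\sim (1-t)^{1+\alpha/p}$ up to a log factor that does not alter integrability.

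With this asymptotic, the integrand $d(s)^{1/(p-1)}/y(s)^{1/p}$ behaves like $(1-s)^{\delta/(p-1) - (\alpha+1)/p}$. A direct computation of the exponent yields
\[
\frac{\delta}{p-1}-\frac{\alpha+1}{p} = \frac{\delta}{p-1} - \frac{\gamma+\delta/(p-1)+1}{p} = \frac{\gamma-\delta+1}{p}-1,
\]
so the integral defining $z_0$ converges iff $(\gamma-\delta+1)/p<1$, which is precisely the dichotomy in the statement. Under the same condition, the ratio $y^{1/p}/d^{1/(p-1)}$ appearing in $|U'|$ is $\sim (1-U)^{1-(\gamma-\delta+1)/p}$ with positive exponent, hence tends to $0$ as $U\to 1^-$, yielding $U'(z_0+)=0$.

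The main obstacle is the presence of $h$: in [DZ22] the sign $c>0$ alone drives the monotonicity, whereas here $c-h(t)$ need not have a fixed sign near $t=1$. However, the hypothesis \textbf{(H3)} only requires $h$ to be bounded piecewise continuous, so on a small left neighborhood of $1$ the convective contribution $(c-h(t))\varphi^{1/p}$ is of order $(1-t)^{\beta/p}=(1-t)^{(\alpha+1)/p}$, which under $\alpha<1/(p-1)$ is strictly of lower order than $\varphi'\sim (1-t)^\alpha$; this permits absorbing it into the error when comparing super/sub solutions. The delicate point is to verify the sign of the correction so that Lemma \ref{lecom}(a)--(b) applies in both directions; this is the only place where the argument differs substantively from \cite[Section 6]{DZ22}, and we only need to mention it, since once the $\sim(1-t)^{\alpha+1}$ behavior of $y$ is in hand, the remainder of the proof is a direct computation already carried out there.
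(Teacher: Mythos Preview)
Your approach is essentially the same as the paper's, which simply defers to \cite[Theorem 6.1 and Remark 6.4]{DZ22}: reduce to the first-order problem, pin down $y(t)\sim (1-t)^{\alpha+1}$ near $t=1$ via power-type comparison functions in Lemma~\ref{lecom}, and then test integrability of $d^{1/(p-1)}/y^{1/p}$ at $s=1$. Your remark that the convective term $(c-h(t))\varphi^{1/p}$ is of strictly higher order in $(1-t)$ than $\varphi'$ when $\alpha<1/(p-1)$, and hence can be absorbed regardless of the sign of $c-h$, is exactly the reason the paper can say the convection ``does not conclusively affect'' this case.

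One arithmetic slip: the displayed exponent computation is wrong. With $\alpha=\gamma+\delta/(p-1)$ one gets
\[
\frac{\delta}{p-1}-\frac{\alpha+1}{p}=\frac{\delta}{p}-\frac{\gamma}{p}-\frac{1}{p}=-\frac{\gamma-\delta+1}{p},
\]
not $\frac{\gamma-\delta+1}{p}-1$. Fortunately this does not damage your conclusion, since integrability of $(1-s)^\theta$ at $s=1$ requires $\theta>-1$, which with the correct $\theta$ still gives $(\gamma-\delta+1)/p<1$. Likewise the correct exponent for $|U'|$ is $(\gamma-\delta+1)/p$, not $1-(\gamma-\delta+1)/p$; this is automatically positive under the standing hypotheses (indeed $\gamma-\delta+1\ge \gamma p>0$ follows from $\gamma+\delta/(p-1)\le 1/(p-1)$), so $U'(z_0+)=0$ still follows.
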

\begin{proof} The proof is shown in \cite[Theorem 6.1 and Remark 6.4]{DZ22}.
\end{proof}

\begin{theorem} Suppose that $\gamma>0$, $\delta \in \RR$,  and
\be
\gamma+\frac{\delta}{p-1}>\frac{1}{p-1}.
\notag
\ee
Then the following two cases hold. \\
(i) If $\gamma \geq 1$, $c \geq c^*$ and there exists $\theta \in (0,1) $ such that $c>h(t)$ a.e. in $[1-\theta, 1]$, then $z_0=-\infty$.\\
(ii) If $\gamma<1$ and $c\geq c^*$, then $z_0>-\infty$; furthermore, under the additional assumption
\begin{center}
$\delta \leq 0$ \quad or \quad $c>h(t)$ a.e. in $[1-\theta,1]$ for some $\theta \in (0,1)$,
\end{center}
we conclude that $U'(z_0+)=0$, i.e., $U$ is a $C^1$-function in a neighborhood of $z_0 \in \RR$. 
\end{theorem}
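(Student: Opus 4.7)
My plan is to recast the question via Proposition \ref{prop}: the behavior of the profile $U$ near $z_0$ is governed by that of the first-order solution $y$ near $t=1$, through the explicit formulas
\[
-z_0 = \int_{1/2}^{1}\Bigl(\frac{d(s)}{y(s)^{1/p'}}\Bigr)^{\frac{1}{p-1}}\,ds, \qquad |U'(z)| = \Bigl(\frac{y(U)^{1/p'}}{d(U)}\Bigr)^{\frac{1}{p-1}}.
\]
Set $\beta := \gamma + \delta/(p-1)$, so the standing hypothesis $\beta > 1/(p-1)$ is equivalent to $p\beta - 1 > \beta$, and $f(t) = d(t)^{1/(p-1)} g(t) \sim (1-t)^\beta$ as $t\to 1^-$. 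The natural ansatz is $y(t) \asymp (1-t)^{p\beta}$, under which $(d(s)/y(s)^{1/p'})^{1/(p-1)} \asymp (1-s)^{-\gamma}$; this is integrable near $s=1$ iff $\gamma < 1$. So the whole theorem reduces to matching one-sided bounds of the form $y(t) \asymp (1-t)^{p\beta}$.

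For case (i), I would construct the supersolution $\varphi(t) := A(1-t)^{p\beta}$ on a one-sided neighborhood $[1-\theta',1]$ of $1$ with $A$ large. A direct computation yields
\[
P_c\varphi = -Ap\beta (1-t)^{p\beta-1} - p'(c-h(t))A^{1/p}(1-t)^\beta + p'f(t),
\]
and the key point is to force the middle term to dominate $p'f$ at order $(1-t)^\beta$. This is exactly where the hypothesis $c > h(t)$ a.e.\ on $[1-\theta,1]$ enters: since $h$ is continuous on $(k_q,1)$, the strict inequality $c > h(t)$ yields $(c-h(t)) \geq c_0 > 0$ on a slightly smaller interval, and then $A$ chosen large enough that $c_0 A^{1/p} > \limsup_{t\to 1^-} f(t)/(1-t)^\beta$ makes $P_c\varphi \leq 0$ near $1$ (the first term being subordinate since $p\beta - 1 > \beta$). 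Increasing $A$ further to secure $\varphi(1-\theta') \geq y(1-\theta')$, a local (backward) version of Lemma~\ref{lecom}(a) gives $y(t) \leq A(1-t)^{p\beta}$ on $[1-\theta',1]$, hence $-z_0 \geq C\int_{1-\theta'}^{1}(1-s)^{-\gamma}\,ds = +\infty$ for $\gamma \geq 1$.

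For case (ii), I would produce the subsolution $\psi(t) := B(1-t)^{p\beta}$ with $B > 0$ small. Crucially this construction needs \emph{no} sign or size assumption on $c-h(t)$: the middle term of $P_c\psi$ is bounded by $p'(|c|+h_M) B^{1/p}(1-t)^\beta$ using only boundedness of $h$, while $f$ contributes a positive $(1-t)^\beta$-term and the remaining $Bp\beta(1-t)^{p\beta-1}$ is $o((1-t)^\beta)$; choosing $B$ small enough makes $P_c\psi \geq 0$ near $1$, and Lemma~\ref{lecom}(b) (local version) yields $y(t) \geq B(1-t)^{p\beta}$ near $1$. Substitution gives $-z_0 \leq C\int_{1-\theta'}^{1}(1-s)^{-\gamma}\,ds < \infty$ for $\gamma < 1$, proving $z_0 > -\infty$. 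For $U'(z_0+) = 0$ I would split on the extra hypothesis: if $\delta \leq 0$, then $d(U)$ is bounded away from $0$ as $U\to 1^-$, so $(y^{1/p'}/d)^{1/(p-1)} \to 0$ follows from $y\to 0$ trivially; if instead $c > h$ a.e.\ near $1$, the supersolution bound $y \leq A(1-t)^{p\beta}$ from case (i) gives $|U'(z)| \leq C(1-U)^\gamma \to 0$.

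The main obstacle is exactly the middle term $(c-h(t))A^{1/p}(1-t)^\beta$ in the \emph{supersolution} construction: unlike in the convection-free setting of \cite[Section 6]{DZ22}, neither its sign nor its size can be controlled from $c \geq c^*$ alone, and this is precisely what forces the extra assumption $c > h$ a.e.\ near $1$ in case (i) and in the second clause of case (ii). The \emph{subsolution}, by contrast, is insensitive to the sign of $c-h$ because the crude bound $|c-h(t)| \leq |c|+h_M$ is sufficient, which is why the $z_0 > -\infty$ assertion in (ii) needs no hypothesis beyond $\gamma < 1$. A minor point is that Lemma~\ref{lecom} is stated on the whole $[0,1]$, but its backward-contradiction proof adapts verbatim to any subinterval $[\tau,1]$ once the boundary inequality at $\tau$ is imposed.
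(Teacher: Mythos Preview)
Your approach is correct and coincides with the paper's: the proof in the paper defers to \cite[Theorem 6.2]{DZ22}, which is exactly the construction of power-type comparison functions $\kappa(1-t)^{p\beta}$ you outline, with the sole modification that the supersolution step (their $y_{\bar\kappa}$) now requires $c-h(t)>0$ near $t=1$ while the subsolution step goes through with only the boundedness of $h$. Two minor remarks: the extra condition $\varphi(1-\theta')\geq y(1-\theta')$ is not needed, since the backward comparison in Lemma~\ref{lecom} uses only the inequality at $t=1$; and in the crude bound for the subsolution you should write $|c|+\|h\|_{L^\infty}$ rather than $|c|+h_M$, as $h$ need not be nonnegative.
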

\begin{proof} For the case $\gamma \geq 1$ the proof goes similarly as in \cite[Theorem 6.2]{DZ22} under the assumption $c-h(t) > 0$ near $t=1$. Indeed, the assumption allows us to construct $y_{\bar \kappa}$ introduced in \cite[Theorem 6.2]{DZ22}. The proof for the case $\gamma<1$ is essentially the same as in \cite[Theorem 6.2]{DZ22} to obtain that $z_0>-\infty$. However, in this case, if $\delta \leq 0$ then $liminf_{t \rightarrow 1^-} d(t)>0$. It follows immediately  from Theorem \ref{Existence_U} (iii) that $U'(z_0+)=0$. Even if $\delta>0$, assuming $c-h(t) > 0$ near $t=1$, we can still conclude $U'(z_0+)=0$ from the reasoning in \cite[Theorem 6.2 and Remark 6.4]{DZ22}. 
\end{proof}

\subsection{Asymptotics near $0$}

Next, we assume that the functions $g$ and $d$ satisfy
\be
g(t) \sim t^\a \quad  \text{and} \quad d(t)\sim t^\beta
\notag
\ee
as $t \rightarrow 0^+$ for some $\a>0$ and $\beta \in \RR$. For the existence of the profile $U$, $\nu < +\infty$ in \eqref{nu_U}, from which we get that
\be \label{alphabeta}
\a+\frac{\b}{p-1} \geq \frac{1}{p-1}.
\ee
By setting
\be \label{f1}
f_1:=\ds \sup_{t \in (0,1)} \frac{(d(t))^{\frac{1}{p-1}}g(t)}{t^{\a+\frac{\beta}{p-1}}},
\ee
we see from \eqref{mu_U} and \eqref{alphabeta} that $\mu \leq f_1 <+\infty$.

\smallskip

We now state the asymptotics of the profile $U$ near $0$ as follows.

\begin{theorem} Suppose that $\a>0$, $\beta \in \RR$, and \eqref{alphabeta} holds. Let $f_1$ be as in \eqref{f1}. Then the following two cases hold.\\
(i) If $\a\geq 1$ and $c \geq h_M+ (p')^{\frac{1}{p'}}p^{\frac{1}{p}}f_1^{\frac{1}{p'}}$, then $z_1=+\infty$. \\
(ii) If $0<\a<1$ and $c \geq c^*$, then $z_1<+\infty$; moreover, under the additional assumption 
\begin{center}
$\beta \leq 0$ \quad or \quad $c \geq h_M+ (p')^{\frac{1}{p'}}p^{\frac{1}{p}}f_1^{\frac{1}{p'}}$,
\end{center}
we have that $U'(z_1-)=0$, i.e., $U$ is a $C^1$-function in a neighborhood of $z_1 \in \RR$. 
\end{theorem}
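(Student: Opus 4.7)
The plan is to translate both parts into integrability properties of the associated first-order solution $y = y(t)$ of \eqref{ode} near $t = 0$, then to apply Lemma \ref{lecom} with power-type barriers. From the derivation preceding Proposition \ref{prop}, namely \eqref{z(U)} and \eqref{dUdz}, one reads off
$$ z_1 = \int_0^{1/2} \frac{d(s)^{\frac{1}{p-1}}}{y(s)^{\frac{1}{p}}}\md s, \qquad |U'(z_1-)| = \lim_{s \to 0^+} \frac{y(s)^{\frac{1}{p}}}{d(s)^{\frac{1}{p-1}}},$$
so that $(i)$ reduces to divergence of this integral while $(ii)$ reduces to its convergence and, in the sharper setting, to vanishing of the pointwise limit.

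For $(i)$, the key observation is that whenever $\varphi > 0$,
$$P_c\varphi \leq \varphi'(t) - p'\bigl[(c-h_M)\varphi(t)^{1/p} - f(t)\bigr],$$
since $(c - h(t))\varphi^{1/p} \geq (c - h_M)\varphi^{1/p}$. Thus every supersolution of the non-convective first-order problem with effective speed $c - h_M$ is also a supersolution of \eqref{ode}. As the hypothesis $c \geq h_M + (p')^{1/p'} p^{1/p} f_1^{1/p'}$ is precisely the threshold of the corresponding non-convective result in \cite[Section 6]{DZ22} with $c$ replaced by $c - h_M$ and $\mu$ replaced by $f_1$, the power-type supersolution constructed there applies verbatim and yields, near $t = 0$, the sharp upper bound $y(t) \lesssim t^{p[\alpha + \beta/(p-1)]}$, hence $d(s)^{1/(p-1)}/y(s)^{1/p} \gtrsim s^{-\alpha}$. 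Combined with $\alpha \geq 1$, this forces $z_1 = +\infty$.

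For $(ii)$, Lemma \ref{lecom}(b) asks for $P_c\varphi \geq 0$, which is implied by $\varphi' \geq p'[(c-h_m)\varphi^{1/p} - f(t)]$ since $c - h(t) \leq c - h_m$. Hence the subsolution from the analogous result in \cite[Section 6]{DZ22} with $c$ replaced by $c - h_m$ produces a lower bound on $y$ near $0$; the assumption $0 < \alpha < 1$ then makes the integrand of $z_1$ integrable at $0$, giving $z_1 < +\infty$. For the sharper conclusion $U'(z_1-) = 0$: if $\beta \leq 0$ then $\liminf_{t \to 0^+} d(t) > 0$, so Theorem \ref{Existence_U}(iii) at the endpoint $z_1$ forces $d(U(z))|U'(z)|^{p-2}U'(z) \to 0$, which yields $U'(z_1-) = 0$. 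If instead $\beta > 0$ but $c \geq h_M + (p')^{1/p'} p^{1/p} f_1^{1/p'}$, we invoke the sharper upper bound $y(t) \lesssim t^{p[\alpha + \beta/(p-1)]}$ from $(i)$; an elementary exponent check using \eqref{alphabeta} gives $y(t)^{1/p}/d(t)^{1/(p-1)} \lesssim t^{\alpha} \to 0$.

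The main obstacle is to verify rigorously that the shifts $c \mapsto c - h_M$ and $c \mapsto c - h_m$ are the only genuine modifications needed to port the barrier constructions of \cite[Section 6]{DZ22} to the convective setting; i.e., that once the effective speed is in place, the power-type supersolutions and subsolutions retain the required sign of their defect $P_c$ without any further adjustment. This is the essence of the paper's earlier remark that the convection term does not substantively affect the asymptotic analysis.
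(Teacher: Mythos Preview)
Your proposal is correct and follows essentially the same route as the paper: the paper's proof (which is only sketched there, referring to \cite[Theorem 6.5]{DZ22}) also handles case (i) by replacing $c$ with $c-h_M$ in the supersolution barrier and case (ii) by replacing $c$ with $c-h_m$ in the subsolution barrier (written there as $(f_2/c)^p \mapsto (f_2/(c-h_m))^p$), and treats $U'(z_1-)=0$ exactly as you do, via $\liminf_{t\to 0^+}d(t)>0$ when $\beta\leq 0$ and via the sharp upper bound from (i) otherwise. Your explicit defect inequalities $P_c\varphi \le \varphi' - p'[(c-h_M)\varphi^{1/p}-f]$ and $P_c\varphi \ge \varphi' - p'[(c-h_m)\varphi^{1/p}-f]$ are precisely the mechanism that justifies these replacements, so the ``main obstacle'' you flag is in fact already resolved by those one-line observations.
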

\begin{proof} The proof can be obtained similarly as in \cite[Theorem 6.5]{DZ22} by replacing $c$ with $c-h_M$ for the first case and by replacing $\ds \Big(\frac{f_2}{c} \Big)^p$ with $\ds \Big(\frac{f_2}{c-h_m} \Big)^p$ for the second case, where $f_2$ is given in \cite[Theorem 6.5]{DZ22}. 
The statement concerning $U'(z_1-)$ follows from the fact $liminf_{t \rightarrow 0^+} d(t)>0$ for $\beta \leq 0$ and the same reasoning as in \cite[Theorem 6.5 and Remark 6.7]{DZ22}.
\end{proof}


\begin{thebibliography}{99}

\bibitem{AW} D.G. Aronson and H.F. Weinberger, {\it Nonlinear diffusion in population genetics, combustion, and nerve pulse propagation}, in Partial differential equations and related topics, ed. J.A. Goldstein.
Lecture Notes in Mathematics \textbf{446}, Springer Berlin Heidelberg, pp. 5--49, 1975.

\bibitem{AV} A. Audrito and J.L. V\'{a}zquez, {\it The Fisher-KPP problem with doubly nonlinear diffusion}, J. Differential Equations \textbf{263} (2017), 7647–-7708.

\bibitem{CL} E.A. Coddington and N. Levinson, {\it Theory of ordinary differential equations}. McGraw-Hill Book Co., Inc., New York-Toronto-London, 1955.

\bibitem{DM} P. Dr\'{a}bek and J. Milota, {\it Methods of Nonlinear Analysis: Applications to Differential Equations}, Birkh{\"a}user Advanced Texts Basler Lehrb{\"u}cher, Springer Basel, 2013.

\bibitem{DrTa} P. Dr\'{a}bek and P. Tak\'{a}\v{c}, {\it Travelling waves in the Fisher–KPP equation with nonlinear degenerate or singular diffusion}, Appl. Math. Optim. \textbf{84} (2021), 1185–-1208.

\bibitem{DZ22} P. Dr\'{a}bek and M. Zahradn\'{i}kov\'{a}. {\it Travelling waves for generalized Fisher-Kolmogorov equation with discontinuous density dependent diffusion}, Math. Meth. Appl. Sci. \textbf{46} (2023), no. 11, 12064--12086.

\bibitem{Enguica} R. Engui\c{c}a, A. Gavioli and L. Sanchez, {\it A class of singular first order differential equations with applications in reaction-diffusion}, Discrete Contin. Dyn. Syst. \textbf{33} (2013), 173--191.

\bibitem{Fisher} R.A. Fisher, {\it The wave of advance of advantageous genes}, Ann. Eugenics \textbf{7} (1937), 353--369.

\bibitem{GK} B.H. Gilding and R. Kersner, {\it Travelling Waves in Nonlinear Diffusion-Convection Reaction}. Birkh\"{a}user Verlag, Basel, 2004.

\bibitem{Hamydy} A. Hamydy, {\it Travelling wave for absorption-convection-diffusion equations}, Electron. J. Differ. Equ. \textbf{2006} (2006), no. 86, 1--9.

\bibitem{KingMcCabe} J.R. King and P.M. McCabe, {\it On the Fisher--KPP Equation with Fast Nonlinear Diffusion}, Proceedings: Mathematical, Physical and Engineering Sciences \textbf{459} (2003), no. 2038, 2529–-46.

\bibitem{MM02} L. Malaguti and C. Marcelli, {\it Travelling wavefronts in reaction-diffusion equations with convective effects and non-regular terms}, Math. Nachr. \textbf{242} (2002), 148--164.

\bibitem{MM03} L. Malaguti and C. Marcelli, {\it Sharp profiles in degenerate and doubly degenerate Fisher--KPP equations},  J. Differential Equations \textbf{195} (2003), 471--496.


\bibitem{MM05} L. Malaguti and C. Marcelli, {\it Finite speed of propagation in monostable degenerate reaction-diffusion-convection equations},  Adv. Nonlinear Stud. \textbf{5} (2005), no. 2, 223–-252.

\bibitem{MMM} L. Malaguti, C. Marcelli and S. Matucci, {\it The effects of convective processes on front propagation in various reaction-diffusion equations}, EQUADIFF 2003, 795–-800, World Sci. Publ., Hackensack, NJ, 2005.

\bibitem{Murray} J.D. Murray, {\it Mathematical Biology I: An Introduction}, 3rd Edition, Springer-Verlag, Berlin, 2002.

\bibitem{Nagumo} J. Nagumo, S. Arimoto and S. Yoshizawa. {\it An Active Pulse Transmission Line Simulating Nerve Axon}, in Proceedings of the IRE \textbf{50} (1962), no. 10, 2061--2070.

\bibitem{SG-Maini} F. S\'{a}nchez-Gardu\~{n}o and P.K. Maini, {\it Existence and uniqueness of a sharp travelling wave in degenerate non-linear diffusion Fisher-KPP equation}, J. Math. Biol. \textbf{33} (1994), 163--192.

\bibitem{Strier} D.E. Strier, D.H. Zanette and H.S. Wio, {\it Wave fronts in a bistable reaction-diffusion system with density-dependent diffusivity}, Phys. A: Stat. Mech. Appl. \textbf{226} (1996), no. 3, 310--323.

\bibitem{W} W. Walter, {\it Ordinary differential equations}, Graduate Texts in Mathematics, 182. Readings in Mathematics. Springer-Verlag, New York, 1998. xii+380 pp. ISBN: 0-387-98459-3 34-01

\end{thebibliography}
\end{document}